\RequirePackage{fix-cm}
\documentclass[smallextended,oneside]{svjour3}

\smartqed

\usepackage{graphicx}
\usepackage{amsmath,amssymb}
\usepackage{subcaption}
\usepackage[ruled,vlined]{algorithm2e}
\usepackage{booktabs}
\usepackage{tikz}
\usetikzlibrary{quotes,arrows.meta,positioning,3d}
\usepackage{array}

\newcommand{\subheading}[1]{\noindent{{\bfseries\sffamily #1}}}
\journalname{Journal of Scientific Computing}
\begin{document}

\title{Enhancing Future Prediction of Linear and Nonlinear Reduced-Order Models for Transport-Dominated Problems Using Lagrangian Data}

\author{Meng Li \and Yang Xiang\and Zhichao Peng}


\institute{Meng Li \at
Department of Mathematics, The Hong Kong University of Science and Technology, Clear Water Bay, Hong Kong Special Administrative Region of China\\
\and
Zhichao Peng (corresponding author)\at
Department of Mathematics, The Hong Kong University of Science and Technology, Clear Water Bay, Hong Kong Special Administrative Region of China \\
\email{pengzhic@ust.hk}
\and
Yang Xiang (corresponding author) \at
Department of Mathematics, The Hong Kong University of Science and Technology, Clear Water Bay, Hong Kong Special Administrative Region of China, and HKUST Shenzhen-Hong Kong Collaborative Innovation Research Institute, Futian, Shenzhen, China \\
\email{maxiang@ust.hk}
}

\date{Received: date / Accepted: date}

\maketitle

\begin{abstract}
Designing effective reduced-order models (ROMs) for parametrized transport-dominated problems remains challenging because of the well-known Kolmogorov barrier. Autoencoder-based nonlinear ROMs have been developed to improve the compression ability for such systems. However, despite their stronger compression ability, autoencoder-based ROMs constructed in the Eulerian frame may fail to accurately predict future solutions, due to the poor coherence between historical and future solutions in the Eulerian frame.

In contrast, we show that representing transport-dominated dynamics in the Lagrangian frame can lead to a significantly faster decay of the Kolmogorov $n$-width and improve coherence between historical and future solutions. Building on these insights, we develop two non-intrusive ROMs leveraging Lagrangian data: a Lagrangian autoencoder-based ROM and a Lagrangian parametric dynamic mode decomposition. Numerical experiments demonstrate that these Lagrangian ROMs achieve more accurate and stable future predictions than their Eulerian counterparts.

\keywords{reduced-order model \and transport-dominated problems \and Lagrangian methods \and autoencoders \and parametric dynamic mode decomposition \and Kolmogorov $n$-width}
\subclass{ 35Q49 \and 41A46\and 68T07 \and 65M99}
\end{abstract}

\section{Introduction\label{sec:intro}}
Parametrized partial differential equations (PDEs) arise in a wide range of multi-query scientific and engineering applications, such as sensitivity analysis, uncertainty quantification, and optimal control. In these applications,  high-fidelity discrete systems, namely full-order models (FOMs), are solved repeatedly, demanding significant computational costs. To lower costs, reduced-order models (ROMs) are constructed to provide efficient low-dimensional surrogates that approximate high-fidelity solutions. Typically, an offline–online decomposition framework is applied in model order reduction. In the offline stage, a low-dimensional representation is constructed by extracting coherent structures from collected solution snapshots; in the online stage, fast predictions are enabled via projection or interpolation using the low-dimensional representation constructed offline.

Despite their success for elliptic and parabolic problems, linear ROMs often face a fundamental limitation in transport-dominated regimes. In such settings,  the optimal error of approximating the solution manifold $\mathcal{M}$ by an $n$-dimensional linear subspace $\mathcal{V}_n$, namely the Kolmogorov $n$-width,
\begin{equation}\label{eq:n-width}
d_n(\mathcal{M}):= \inf_{\substack{\dim(\mathcal{V}_n) = n}} \sup_{u\in\mathcal{M}} \inf_{v \in \mathcal{V}_n} \|u - v\|,
\end{equation}
may decay slowly, leading to the well-known Kolmogorov barrier \cite{Ohlberger2016-limitations,Peherstorfer2022-kolmogorov-barrier}. As a consequence, relatively high-dimensional ROMs may be required to achieve the desired approximation accuracy.

To mitigate this limitation, various nonlinear ROMs have been developed. One strategy is to construct piecewise-linear ROMs by introducing offline and/or online adaptivity in space and/or time \cite{Carlberg2015a-h-adaptive,peherstorfer2020model,huang2023predictive,dihlmann2011model,san2015principal,ji2025aaroc,jin2025adaptive,li2025localized}. Another strategy is to construct nonlinear ROMs through nonlinear transformations that capture intrinsic low-dimensional structures in transport-dominated problems. Such transformations can be constructed by using characteristic lines \cite{rowley2003reduction,Mojgani2017-lagpod,reiss2018shifted,Lu2020-lagdmd,papapicco2022neural}, registration \cite{taddei2015reduced,Mojgani2017-lagpod,taddei2020registration}, problem-specific structures \cite{rim2018transport,cagniart2018model}, optimal transport \cite{ehrlacher2019nonlinear,rim2023manifold}, and deep learning \cite{Lee2020-cae-rom,kim2022fast,fresca2021comprehensive,papapicco2022neural,peng2023learning}. One representative and widely used method among these transformation-based approaches is the autoencoder-based ROM \cite{Lee2020-cae-rom,fresca2021comprehensive,kim2022fast}. For a comprehensive overview, we refer readers to \cite{Peherstorfer2022-kolmogorov-barrier}.

Despite their effectiveness in compressing transport-dominated solutions, current autoencoder-based nonlinear ROMs trained in a fixed Eulerian frame may struggle to accurately extrapolate in future prediction tasks for transport-dominated systems. Due to the moving nature of solutions to transport-dominated problems, desired future solutions in prediction tasks may become weakly correlated or even uncorrelated with the historical training data. Hence, future prediction is a challenging out-of-distribution extrapolation problem.

In this paper, we argue that this limitation can be addressed by exploiting the characteristic-line information encoded in the Lagrangian frame. We show that the Kolmogorov $n$-width of some transport problems decays faster in the Lagrangian frame. Moreover, unlike the Eulerian frame, future solution data in the Lagrangian frame remain  strongly correlated with the historical training data. In this sense, using Lagrangian data  converts future prediction from an out-of-distribution extrapolation into an interpolation over correlated data. These insights explain the success of Lagrangian POD \cite{Mojgani2017-lagpod} and Lagrangian DMD \cite{Lu2020-lagdmd,lu2021dynamic}.  

Building on this observation, we further develop Lagrangian ROMs that can more effectively predict future solutions of transport-dominated problems compared to their Eulerian counterparts. First, we design a Lagrangian autoencoder. In the offline stage, Lagrangian data are used to train an autoencoder that learns a Lagrangian low-dimensional nonlinear manifold approximation. In the online stage, future predictions are performed in a non-intrusive manner using  parametric DMD framework proposed in \cite{Andreuzzi2023-pdmd-partition-stack,Duan2024-cae-hodmd}. In addition, as a byproduct of this non-intrusive online stage, we propose a Lagrangian parametric DMD, which can be viewed as the Lagrangian counterpart of the parametric DMD \cite{Andreuzzi2023-pdmd-partition-stack} and as a parametric extension of the Lagrangian DMD in \cite{Lu2020-lagdmd}.

The main contributions of this work are twofold. First,  we theoretically prove a faster decay of Kolmogorov $n$-widths for transport-dominated problems in the Lagrangian frame and empirically demonstrate the stronger temporal coherence between solutions. These findings imply stronger compression and prediction ability of Lagrangian ROMs. Second, we develop a Lagrangian autoencoder-based ROM and a Lagrangian parametric DMD for parametrized transport-dominated PDEs, and demonstrate their stronger prediction ability compared to their Eulerian counterparts.

The remainder of this paper is organized as follows. Section~\ref{sec:rom} reviews ROM techniques and discusses the challenges faced by linear and nonlinear Eulerian ROMs in future prediction tasks. Section~\ref{sec:motivation} motivates the use of Lagrangian data through theoretical analysis and empirical evidence. Section~\ref{sec:lagrom} presents the proposed Lagrangian autoencoder and Lagrangian parametric DMD. Section~\ref{sec:experiment} demonstrates the performance of the proposed methods. Finally, Section~\ref{sec:conclusion} concludes the paper and outlines potential future directions.


\section{Background and motivation}\label{sec:rom}

Constructing efficient reduced-order models (ROMs) for transport-dominated systems is challenging, especially for predicting future states beyond the training time window. Classical linear ROMs approximate the solution manifold by a low-dimensional linear subspace. The optimal approximation error over all $n$-dimensional linear subspaces is measured by the Kolmogorov $n$-width defined in~\eqref{eq:n-width}. Unfortunately, this optimal error often decays slowly in transport-dominated problems, implying that a relatively high-dimensional ROM is required to achieve accurate approximation \cite{Ohlberger2016-limitations,ehrlacher2019nonlinear,Peherstorfer2022-kolmogorov-barrier}. This phenomenon, known as the Kolmogorov barrier, limits the efficiency of linear ROMs.

To alleviate this limitation, various nonlinear ROMs have been developed to approximate the solution manifold using nonlinear representations. Among them, autoencoder-based ROMs, which learn low-dimensional nonlinear manifold approximations from data, have shown strong compression performance for transport-dominated systems \cite{Lee2020-cae-rom,fresca2021comprehensive,kim2022fast}. 

However, in this section, we demonstrate that strong compression in the Eulerian frame alone is insufficient for an accurate future prediction, thereby motivating the use of the Lagrangian frame. To serve this purpose, we first introduce the model problem in both the Eulerian and Lagrangian frames, then review representative Eulerian ROMs, and finally illustrate the prediction limitations of Eulerian representations.

\subsection{Full-order model in Eulerian and Lagrangian frames}\label{subsec:fom}
Although our approach extends to higher dimensions, we begin with a one-dimensional parametrized advection--diffusion equation on $(x,t) \in [a,b]\times [0,T]$ for clarity:
\begin{subequations}\label{eqn:ad}
\begin{align}
    & \frac{\partial u}{\partial t}+f(u;\mu)\frac{\partial u}{\partial x}
    =\frac{\partial }{\partial x}\!\left(D(x,t,u;\mu)\frac{\partial u}{\partial x}\right), \quad f(u)=\frac{\partial F(u;\mu)}{\partial u}\\
    & u(x,t=0;\mu)=u_0(x;\mu)
\end{align}
\end{subequations}
with appropriate boundary conditions at $x = a$ and $x = b$. Here $\mu$ parametrizes physical configurations (e.g., material properties, forcing, or boundary conditions). This equation can be solved in either a fixed Eulerian frame or a moving Lagrangian frame.

\subheading{Eulerian frame.} In the Eulerian frame, we solve \eqref{eqn:ad} on a fixed grid $\boldsymbol{x}=[x_1,\dots,x_M]^T$ with uniform spacing $\Delta x$ and time step $\Delta t=T/N$. For simplicity, we use a first-order implicit-explicit time discretization, a first-order  upwind discretization for the advection term, and a central difference discretization for the diffusion term. Here, we omit the parameter dependence for notational simplicity.
\begin{equation}\label{eqn:euler_fdm}
\begin{aligned}
u_j^{n+1}
=\, &u_j^{n}
-\frac{\Delta t}{\Delta x}
\left(F_{j+\frac12}^{n}-F_{j-\frac12}^{n}\right) \\
&+\frac{\Delta t}{(\Delta x)^2}
\Big[
D_{j+\frac12}^{n+1}\left(u_{j+1}^{n+1}-u_{j}^{n+1}\right)
-D_{j-\frac12}^{n+1}\left(u_{j}^{n+1}-u_{j-1}^{n+1}\right)
\Big],
\end{aligned}
\end{equation}
where $D_{j+1/2}^{n+1} = \frac{1}{2}(D_{j}^{n+1}+D_{j+1}^{n+1})$, and the numerical flux is given by
\[
\begin{aligned}
    & F_{j+1/2}^n = \frac{F(u_{j+1}^n)+F(u_{j}^n)}{2}-|a^n_{j+1/2}|\frac{u^n_{j+1}-u_j^n}{2} \\
    & a^n_{j+1/2} = \left\{\begin{aligned}
        & \frac{F_{j+1}^n-F_j^n}{u_{j+1}^n-u_j^n}, \text{ if } u_{j+1}^n\neq u_j^n \\
        & f(u_j), \text{ if } u_{j+1}^n = u_j^n
    \end{aligned}\right. \\
\end{aligned}
\]

\subheading{Lagrangian frame.}
Instead of using a fixed grid, Lagrangian methods evolve grid points along characteristic lines of the transport term. Let $\hat{x}\in[a,b]$ be a reference coordinate and define the characteristic line as $\chi(\hat{x},t)$ satisfying
\begin{equation}
\frac{d\chi}{dt}=f\!\left(u(\chi(\hat{x},t),t)\right),\qquad \chi(\hat{x},0)=\hat{x}.
\end{equation}
Along the characteristic line, the solution satisfies
\begin{equation}
\frac{d}{dt}u(\chi,t)
=\left.\frac{\partial}{\partial x}\left(D(x,t,u)\frac{\partial u}{\partial x}\right)\right|_{x=\chi(\hat{x},t)}.
\end{equation}
Following \cite{Lu2020-lagdmd}, a Lagrangian numerical discretization  is given by
\begin{equation}
\left\{\begin{aligned}
    & \tilde{u}_j^n = \mathcal{I}^n_{\textrm{L}\rightarrow\textrm{E}}(u_j^n), \\
    & \tilde{u}_j^{n+1}=\tilde{u}_j^{n}
    +\frac{\Delta t}{(\Delta x)^2}\Big[D_{j+\frac12}^{n+1}(\tilde{u}_{j+1}^{n+1}-\tilde{u}_{j}^{n+1})
    -D_{j-\frac12}^{n+1}(\tilde{u}_{j}^{n+1}-\tilde{u}_{j-1}^{n+1})\Big],\\
    & u_{j}^{n+1} = \mathcal{I}^n_{\textrm{E}\rightarrow\textrm{L}}(\tilde{u}_j^{n+1}), \\
    & \chi_j^{n+1} = \chi_j^n + \frac{\Delta t}{2}(f(u_j^n)+f(u_j^{n+1})),
\end{aligned}\right.
\end{equation}
where $\tilde{u}^n$ is the solution in the Eulerian frame, $\mathcal{I}^n_{\textrm{E}\rightarrow\textrm{L}}$/ $\mathcal{I}^n_{\textrm{L}\rightarrow\textrm{E}}$ is the linear interpolation from the Eulerian/Lagrangian grid to the Lagrangian/Eulerian grid,  $\boldsymbol{\chi}^n=[\chi^n_1,\dots, \chi^n_M]^T$ is the Lagrangian grid points at  time $t_n=n\Delta t$.

\subsection{Linear and nonlinear ROMs in the Eulerian frame}\label{subsec:euler_rom}
We now briefly review two representative Eulerian ROMs and highlight their limitations for future prediction: (i) linear dynamic mode decomposition (DMD) \cite{Andreuzzi2023-pdmd-partition-stack,Huhn2023-pdmd-eigen-or-koopman}, and (ii) a nonlinear autoencoder-based ROM \cite{Lee2020-cae-rom,fresca2021comprehensive}. 

\subsubsection{Dynamic mode decomposition}\label{subsubsec:dmd}
\textbf{DMD} \cite{Schmid2010-vanilla-dmd,H.Tu2014-dmd-theory} learns a low-dimensional linear approximation to the Koopman operator $A$ for a discrete dynamical system, $u^{n+1}=Au^n$. Leveraging snapshot data, DMD extracts dominant spatial modes associated with the underlying dynamical system.

Specifically, given snapshots $\{u^k\}_{k=0}^{n}$ (at a fixed parameter value $\mu$), define
\[
\boldsymbol{U}^-=[u^0,\dots,u^{n-1}],\qquad 
\boldsymbol{U}^+=[u^1,\dots,u^{n}],
\]
a reduced-order approximation to the Koopman operator $A$ can be built as:
\begin{enumerate}
        \item Compute truncated SVD: $\mathbf{U}^{-}\approx \Phi\Sigma V^T ,\Phi\in \mathbb{R}^{n_s\times r}, r\ll n_s$.
        \item Construct a reduced-order Koopman operator: $\tilde{A} = \Phi^T U^+ V\Sigma^{-1}\in \mathbb{R}^{r\times r}$.
        \item Compute eigen-decomposition: $\tilde{A}W=W\Lambda$, $\Lambda=\mathrm{diag}(\lambda_k)$ is the eigenvalue matrix.
\end{enumerate}
Then, future solutions $u^{n+k}$ can be predicted as
\begin{equation}
    u^{n+k}= \Phi W\Lambda^k W^{-1}\Phi^T u^n.
\end{equation}

\subsubsection{A non-intrusive autoencoder-based ROM}
In this paper, we consider a non-intrusive autoencoder-based nonlinear ROM. Following \cite{Lee2020-cae-rom}, instead of seeking a linear reduced subspace, a low-dimensional nonlinear manifold is constructed by representing it as a convolutional autoencoder (CAE) neural network (its architecture illustrated in Figure \ref{fig:cae_structure}). The CAE consists of an encoder $ \mathcal{G}_e(\cdot;\theta_{\mathrm{enc}}):\mathbb{R}^{n_s} \to \mathbb{R}^{r}$ and a decoder $ \mathcal{G}_d(\cdot;\theta_{\mathrm{dec}}):\mathbb{R}^{r} \to \mathbb{R}^{n_s}$, where $r\ll n_s$ is the latent dimension, $\theta_{\mathrm{enc}}$ and $\theta_{\mathrm{dec}}$ are trainable parameters defining the autoencoder. The encoder compresses a high-dimensional full-order solution $u\in\mathbb{R}^{n_s}$ to a low-dimensional latent coordinate
\[
h = \mathcal{G}_e(u;\theta_{\mathrm{enc}}) \in \mathbb{R}^r, 
\qquad r \ll n_s.
\]
The decoder lifts a reduced-order coordinate back to the full-order solution
\[
\hat{u} = \mathcal{G}_d(h;\theta_{\mathrm{dec}})\in \mathbb{R}^{n_s}.
\] 
Given training snapshots $\{u^k\}_{k=1}^{n_t}$, the autoencoder is trained by minimizing the reconstruction error
\begin{equation}\label{eq:loss_recon}
\min_{\theta_{\mathrm{enc}},\theta_{\mathrm{dec}}}\frac{1}{n_t}
\sum_{k=1}^{n_t}
\left\|
u^k -
\mathcal{G}_d\bigl(\mathcal{G}_e(u^k;\theta_{\mathrm{enc}});\theta_{\mathrm{dec}}\bigr)
\right\|_2^2
+
\mathcal{R}(\theta_{\mathrm{enc}},\theta_{\mathrm{dec}}),
\end{equation}
where $\mathcal{R}$ denotes a regularization term.

After training, each snapshot is encoded into its latent coordinate $h^k=\mathcal{G}_e(u^k)$, producing a sequence of reduced states $\{h^k\}_k$. A non-intrusive predictor is then applied in the latent space to approximate the latent coordinate for unseen parameters. Based on the predicted latent coordinate, the prediction for the full-order solution can be obtained using the decoder.

\begin{figure}[h]
\centering
\resizebox{0.6\textwidth}{!}{\input{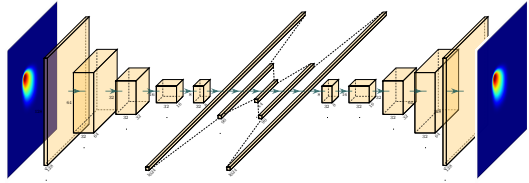}}
\caption{Conventional architecture of the convolutional autoencoder.}
\label{fig:cae_structure}
\end{figure}

\subsection{Prediction Failure in the Eulerian Frame}\label{sec:failure} 
Here, we show that both DMD and the autoencoder-based ROM may still fail in the future prediction of a transport problem.

Consider the 1D advection equation
\[
\frac{\partial u}{\partial t}+\frac{\partial u}{\partial x}=0,\quad x\in[0,2],\ t\in[0,1],
\]
with periodic boundary conditions and a localized initial pulse
\[
u_0(x)=\frac{1}{2}\exp(\;-(x-0.3)^2/0.005^2\;).
\]
Snapshots are generated from the analytic solution $u(x,t)=u_0(x-t)$ on a uniform grid with $n_s=128$ and $n_t=101$. We use $t\in[0,0.8]$ for training and predict future solutions on $t\in(0.8,1.0]$.

Figure~\ref{fig:1d_adv} compares reconstructions and forecasts from DMD and the autoencoder against the ground truth. We observe:
\begin{enumerate}
\item \textbf{Compression.} Due to the slow decay of the Kolmogorov $n$-width, DMD fails to compress the solution data effectively. With only $8$ modes, DMD fails to accurately reconstruct the training data. In contrast, using an $8$-dimensional  nonlinear approximation, the autoencoder-based ROM accurately reconstructs the training data.
\item \textbf{Prediction.} Both DMD and the autoencoder-based ROM fail to accurately predict the future solution. Future solutions become nonzero outside the support of all training snapshots. As a result, these Eulerian ROMs fail to predict translated structures outside the training support.
\end{enumerate}
In summary, due to the moving nature of transport-dominated problems, solutions at future times may exhibit structures that cannot be represented by training data in the fixed Eulerian frame. This motivates exploiting Lagrangian data to predict future solutions, where the dynamics of wave propagation is naturally encoded in the Lagrangian data representing the evolution of characteristic trajectories.
\begin{figure}[h]
    \centering
\begin{subfigure}{0.32\textwidth}
        \includegraphics[width=\textwidth]{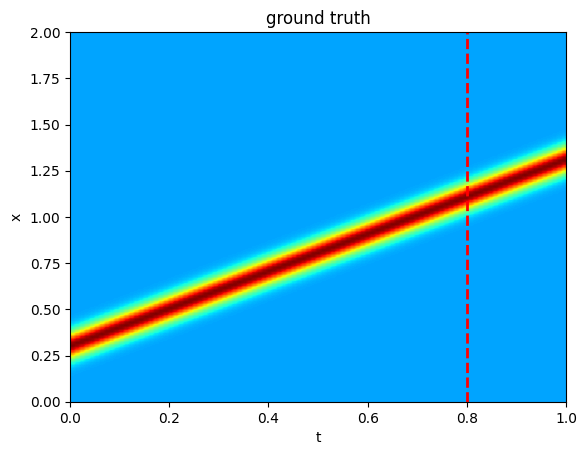}
        \caption{}\label{subfig:1d_adv_gt}
    \end{subfigure}
    \hfill
    \begin{subfigure}{0.32\textwidth}
        \includegraphics[width=\textwidth]{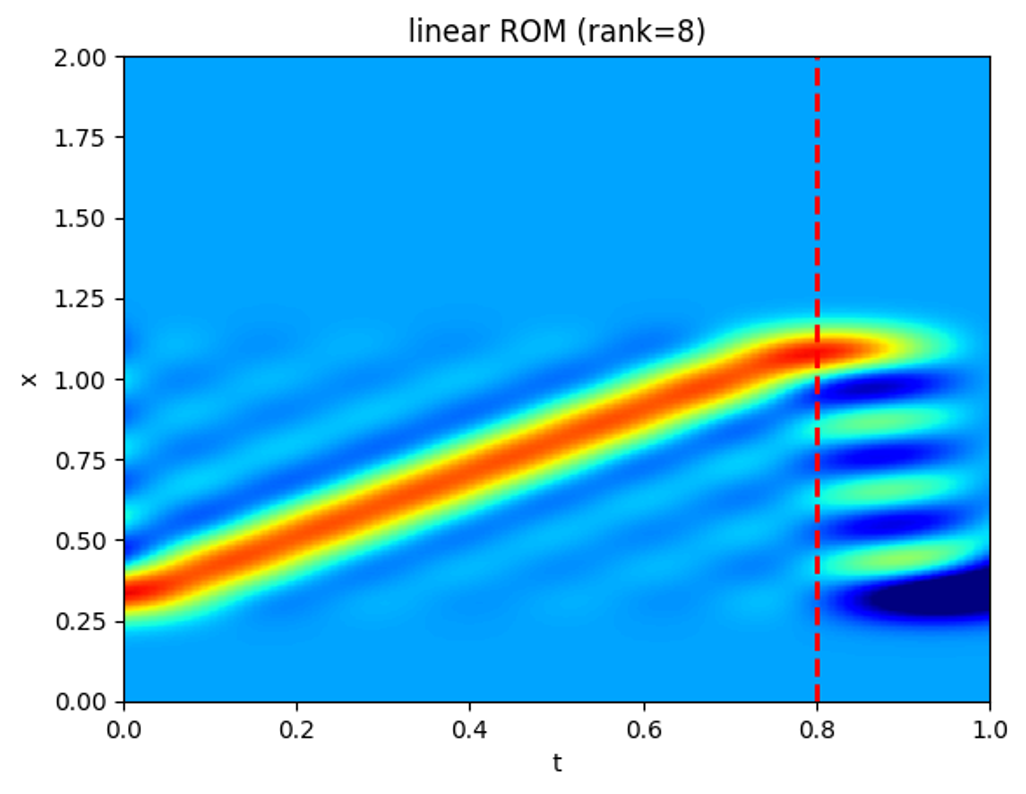}
        \caption{}\label{subfig:1d_adv_eul_dmd}
    \end{subfigure}
    \hfill
    \begin{subfigure}{0.32\textwidth}
        \includegraphics[width=\textwidth]{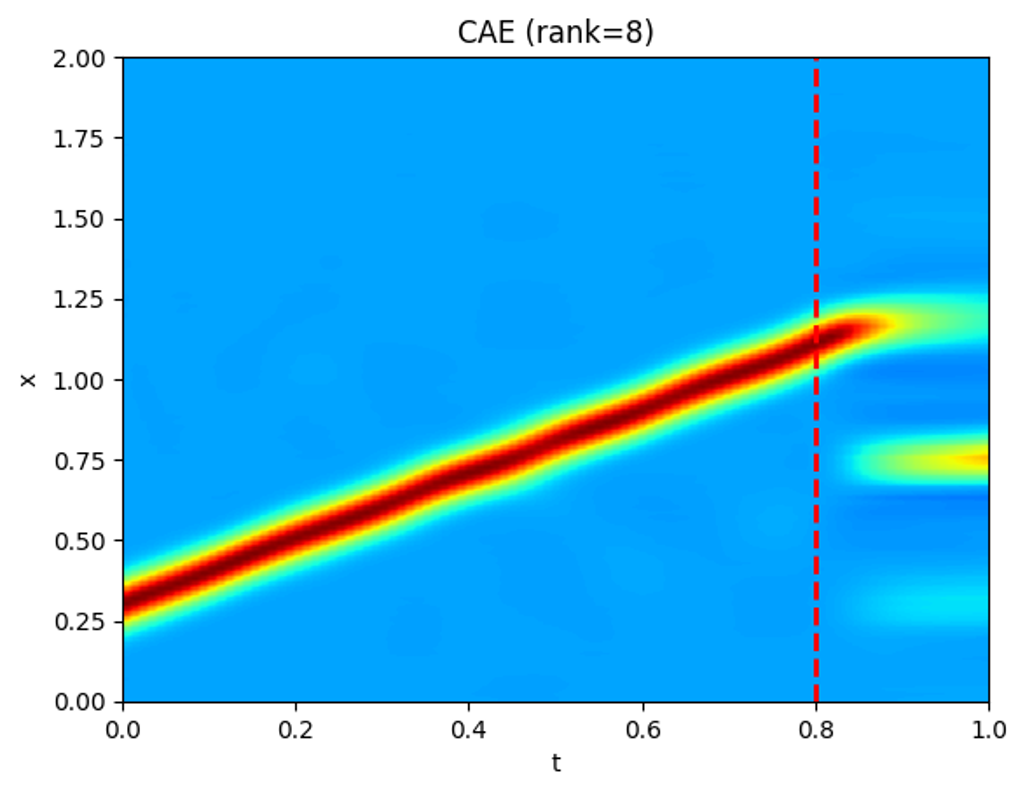}
        \caption{}\label{subfig:1d_adv_eul_cae}
    \end{subfigure}
    \caption{1D advection problem $u_t+u_x=0$. (a) Ground-truth solution. The red dashed line at $t=0.8$ separates the training window $t\in[0,0.8]$ from the test window $t\in(0.8,1]$. (b) DMD-based ROM (rank $r=8$). (c) Autoencoder-based ROM (latent dimension $r=8$). For panel (b)-(c), the region on $t\in[0,0.8]$ shows in-window reconstruction results, whereas the solution over $t\in(0.8,1]$ corresponds to out-of-window forecasts.}
    \label{fig:1d_adv}
\end{figure}


\section{Enhancing compression and prediction via Lagrangian data}\label{sec:motivation}
In this section, we motivate the use of Lagrangian data. We focus on two aspects. First, through theoretical analysis, we show that the Kolmogorov $n$-width decays significantly faster in the Lagrangian frame. Second, we demonstrate that the correlation between future solutions and historical training data is also much stronger in the Lagrangian frame. In this sense, prediction in the Lagrangian frame can be viewed as interpolation over correlated solutions, rather than out-of-distribution extrapolation.

\subsection{Kolmogorov $n$-width in the Lagrangian frame}
The Lagrangian frame, which follows characteristic trajectories, renders coherent structures approximately stationary and reduces the intrinsic dimensionality of the solution manifold. As a result, we are able to prove the faster decay of the Kolmogorov $n$-width in the Lagrangian frame for several transport-dominated problems. 

\subheading{1D transport problem.} First, we consider a linear advection problem with jump discontinuities
\begin{subequations}
\label{eq:1d-n-width-eulerian}
\begin{align}
& \frac{\partial u}{\partial t}+\frac{\partial u}{\partial x}=0,\quad  (x,t)\in[0,1]^2\\
& u(x,0)=0,\quad u(0,t)=1
\end{align}
\end{subequations}
The exact solution is $u(x,t) := u_0(x-t)=\begin{cases}
     1,x\le t, \\ 
     0,x>t. 
\end{cases} $ Let the solution manifold be the set of solutions at different times, then the corresponding Eulerian solution manifold is
$$
\mathcal{M}_E=\{u_0(x-t):t\in[0,1]\}\subset{L^2([0,1])}.
$$
As proved in \cite{Ohlberger2016-limitations}, the Kolmogorov $n$-width in the Eulerian frame with respect to $L_2$ norm $\|\cdot\|_2$, 
\begin{equation}\label{eq:1d-adv-dn}
    d_n(\mathcal{M}_E)\geq cn^{-1/2}.
\end{equation}
where $c>0$ is a constant. In contrast, we will prove that in the Lagrangian frame the $n$-width is $0$ for $n\geq 2$. In the Lagrangian frame, the governing equations become
\begin{equation}
\label{eq:1d-n-width-lagrangian}
\frac{d\chi}{dt} = 1,\quad \frac{du}{dt} = 0.
\end{equation}
where $\chi(\hat{x},t)$ is the characteristic line and $\hat{x}$ is the reference coordinate in $[0,1]$. Then the exact solution is
\begin{equation}
\begin{pmatrix}\chi(\hat{x},t)\\u(\chi,t)\end{pmatrix}= \begin{pmatrix}\hat{x}+t\\u_0(\hat{x})\end{pmatrix}\in {L^2([0,1];\mathbb{R}^2)} 
\end{equation}
endowed with the product norm $\|(\chi,u)\|_{L^2([0,1];\mathbb{R}^2)}=(\int_0^1[\chi^2(\hat{x})+u^2(\hat{x})]d\hat{x})^{1/2}$. The corresponding Lagrangian manifold is
\begin{equation}
\mathcal{M}_L=\left\{ \begin{pmatrix}\chi \\ u\end{pmatrix}: \chi=\hat{x}+t,u=u_0(\hat{x}),t\in[0,1]\right\}\subset{L^2([0,1];\mathbb{R}^2)}
\end{equation}
where time again serves as the varying parameter. In the following theorem, we prove that $\mathcal{M}_L$ is in fact $2$-dimensional. In other words, the Kolmogorov $n$-width in the Lagrangian frame satisfies $d_n(\mathcal{M}_L)=0$ for $n\ge 2$.

\begin{theorem}\label{theo:1d_tran_ko} The Kolmogorov $n$-width in the Eulerian frame satisfies 
\begin{align}\label{eqn:1d_trans_ko}
d_n(\mathcal{M}_E):= \inf_{\substack{\mathcal{V}_n}} \sup_{t\in[0,1]} \inf_{v \in \mathcal{V}_n} \|u(\cdot,t) - v(\cdot)\|\ge cn^{-1/2}.
\end{align}
where $c>0$ is a constant and 
 $\mathcal{V}_n\subset L^2[0,1]$ is an $n$-dimensional subspace.

The Kolmogorov $n$-width in the Lagrangian frame is
\begin{align}\label{eqn:1d_trans_ko_lag}
d_n(\mathcal{M}_L):=\inf_{\substack{\mathcal{V}_n}} \sup_{t\in[0,1]} \inf_{v \in \mathcal{V}_n} \left\|\begin{pmatrix}\chi(\cdot,t)\\u(\cdot,t)\end{pmatrix}-\begin{pmatrix}v_1(\cdot)\\v_2(\cdot)\end{pmatrix}\right\|=0,\quad \forall n\ge 2
\end{align}
where $\mathcal{V}_n\subset L^2([0,1];\mathbb{R}^2)$ and $t$ is the varying parameter. Here,
\[
\|v(\cdot,t)\|_{L^2([0,1];\mathbb{R}^2)}
=
\left(
\int_0^1 \bigl[v_1^2(\hat{x})+v_2^2(\hat{x})\bigr]\,d\hat{x}
\right)^{1/2}.
\]
\end{theorem}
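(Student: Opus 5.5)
The Eulerian bound \eqref{eqn:1d_trans_ko} is exactly the result of \cite{Ohlberger2016-limitations} quoted in \eqref{eq:1d-adv-dn}, so I will invoke it directly. If a self-contained argument is wanted, I would sketch it briefly. Pick $2n$ equally spaced times $t_k$ and form the differences $\psi_k=u(\cdot,t_{k})-u(\cdot,t_{k-1})$, which are indicator functions of disjoint intervals of length $1/(2n)$. Normalized, they are orthonormal. Any $n$-dimensional subspace must leave at least half of their squared mass unapproximated, and this forces $\sup_t \mathrm{dist}(u(\cdot,t),\mathcal{V}_n)\gtrsim n^{-1/2}$. This is the only step with real content, and since the paper cites the result, I expect no obstacle there.

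For the Lagrangian statement \eqref{eqn:1d_trans_ko_lag}, I plan an explicit construction. Every element of $\mathcal{M}_L$ decomposes as
\[
\begin{pmatrix}\hat{x}+t\\u_0(\hat{x})\end{pmatrix}
=\begin{pmatrix}\hat{x}\\u_0(\hat{x})\end{pmatrix}+t\begin{pmatrix}1\\0\end{pmatrix}.
\]
Both $\phi_1=(\hat{x},u_0(\hat{x}))^T$ and $\phi_2=(1,0)^T$ lie in $L^2([0,1];\mathbb{R}^2)$. Here $u_0(\hat{x})$ is the reference-coordinate profile, the bounded step function from the exact solution, so $\phi_1$ is square integrable. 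The two functions are linearly independent, because their first components $\hat{x}$ and $1$ are independent.

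Setting $\mathcal{V}_2=\mathrm{span}\{\phi_1,\phi_2\}$, every element of $\mathcal{M}_L$ lies in $\mathcal{V}_2$, so the inner infimum is $0$ for every $t$ and hence $d_2(\mathcal{M}_L)=0$. For $n\ge2$, I will extend $\mathcal{V}_2$ by arbitrary additional independent functions to obtain an $n$-dimensional $\mathcal{V}_n\supset\mathcal{V}_2$. Since $d_n\ge0$ trivially, this gives $d_n(\mathcal{M}_L)=0$ for all $n\ge 2$.

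The only subtlety is bookkeeping: the product norm must be well defined on these elements, and the "$n$-dimensional" requirement must be met by padding rather than by using a subspace of lower dimension. Neither step is a real obstacle.
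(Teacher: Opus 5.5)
Your proposal is correct and matches the paper's proof. You cite \cite{Ohlberger2016-limitations} for the Eulerian bound, and you use the same decomposition $(\hat{x}+t,u_0(\hat{x}))^T=(\hat{x},u_0(\hat{x}))^T+t(1,0)^T$ to place $\mathcal{M}_L$ in a two-dimensional subspace. Your optional Eulerian sketch is also sound: orthogonal increments, the trace bound $\sum_k\|Pe_k\|^2\le n$ and the triangle inequality give $d_n\ge\tfrac14 n^{-1/2}$. Likewise, your independence and padding bookkeeping correctly fills in details the paper leaves implicit.
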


\begin{proof}
The $n$-width in the Eulerian frame is proved in \cite{Ohlberger2016-limitations}. We focus on the Lagrangian frame here.

Consider the vector-valued function
$$
\begin{aligned}
    \begin{pmatrix}\chi \\ u \end{pmatrix}=
    \begin{pmatrix}\hat{x}+ t\\ u_0(\hat{x})\end{pmatrix}
    = \begin{pmatrix}\hat{x} \\ u_0\end{pmatrix}
    + t \begin{pmatrix} 1\\ 0\end{pmatrix}
\end{aligned}
$$
Clearly,
$$
\begin{aligned}
    \begin{pmatrix}\chi \\ u\end{pmatrix}
    \in
    \text{span}\left\{ \begin{pmatrix}\hat{x}\\ u_0\end{pmatrix},
    \begin{pmatrix}1 \\ 0\end{pmatrix} \right\}
    := \mathcal{W}
\end{aligned}
$$
Hence, $\mathcal{M}_L\subset \mathcal{W}$. Since $\dim(\mathcal{W})=2$, we have $d_n(\mathcal{M}_L)=0$ for $n\geq 2$.
\end{proof}

\subheading{2D transport problem with a parametrized transport direction.} 
Now we consider a parametrized 2D advection problem
\begin{equation}
\left\{\begin{aligned}
    & \frac{\partial u}{\partial t}+\cos\theta\frac{\partial u}{\partial x}+\sin\theta\frac{\partial u}{\partial y}=0,\quad  (x,y)\in [-1,1]^2,t\in[0,T],\theta\in[0,2\pi] \\
    & u(x,y,0) = u_0(x,y) \\
    & u(1,y,t)=u(-1,y,t)=u(x,1,t)=u(x,-1,t)=0
\end{aligned}\right.
\end{equation}
where $u_0(x,y)=\left\{\begin{aligned}1, x^2+y^2\le r^2 \\ 0,x^2+y^2> r^2 \end{aligned}\right.$ with $0<r<1/T$ and $r$ is a fixed constant.

In the Eulerian frame, the solution manifold is defined as 
\[
\mathcal{M}_E = \{u_0(x-t\cos\theta,y-t\sin\theta): \theta\in[0,2\pi],t\in[0,T]\}\subset L^2([-1,1]^2)
\]
with parameters $\theta$ and $t$. The Kolmogorov $n$-width $d_n(\mathcal{M}_E)$ depends on both the transport nature of the problem and the regularity of $u_0$ \cite{Arbes2025-kolomogorov-linear-eqn}. To illustrate this, consider the special case when $\theta = 0$. Under this assumption, the solution reduces to $u(x,y,t) = u_0(x-t,y)$, which corresponds to transport in the $x-$direction. Thus, the problem reduces to the 1D transport problem in \eqref{eq:1d-n-width-eulerian}, whose Kolmogorov $n$-width is proved to be greater than $n^{-1/2}$ in \cite{Ohlberger2016-limitations}. Consequently, by monotonicity of Kolmogorov widths with respect to set inclusion, the Kolmogorov $n$-width of $\mathcal{M}_E$ decays slower.

In the Lagrangian frame, the governing equation becomes:
\begin{equation}
\begin{cases}
&\frac{d\chi}{dt} = \cos\theta \\
&\frac{d\zeta}{dt} = \sin\theta \\
&\frac{du}{dt} = 0 
\end{cases}
\implies
\begin{cases}
&\chi(\hat{x},\hat{y},t) = \hat{x}+t\cos\theta \\
&\zeta(\hat{x},\hat{y},t) =\hat{y}+t\sin\theta \\
&u(\chi,\zeta,t) = u_0(\hat{x},\hat{y}) \\
\end{cases}
\end{equation}
where $\chi(\hat{x},\hat{y},t)$ and $\zeta(\hat{x},\hat{y},t)$ are the characteristic lines and $\hat{x},\hat{y}$ are the reference coordinates in the domain $[-1,1]^2$.

Now we define the solution manifold in the Lagrangian frame as
\[
\mathcal{M}_L =
\left\{
\begin{pmatrix}
\chi \\ \zeta \\ u
\end{pmatrix}
:
\begin{aligned}
\chi &= \hat{x}+t\cos\theta,\\
\zeta &= \hat{y}+t\sin\theta,\\
u &= u_0(\hat{x},\hat{y}), \qquad
\theta\in[0,2\pi],\ t\in[0,T]
\end{aligned}
\right\}
\subset L^2([-1,1]^2;\mathbb{R}^3)
\]
equipped with the norm $\|v(\cdot,t)\|_{L^2([-1,1]^2;\mathbb{R}^3)}=(\int_{-1}^1\int_{-1}^1[v^2_1(\hat{x},\hat{y})+v^2_2(\hat{x},\hat{y})+v^2_3(\hat{x},\hat{y})]d\hat{x}d\hat{y})^{1/2}$. $t,\theta$ are the varying parameters. Then, its Kolmogorov $n$-width becomes $0$ when $n\ge3$.

\begin{theorem}
There exists a constant $c>0$ such that for all $n\in\mathbb{N}^*$,
\begin{align}
    d_n(\mathcal{M}_E)\ge cn^{-1/2}.
\end{align}
In addition,
    \begin{align}
        d_n(\mathcal{M}_L)=0,\quad \forall n \ge 3
    \end{align}
\end{theorem}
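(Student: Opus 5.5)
The plan has two parts: the Lagrangian statement follows from an explicit three-dimensional subspace, and the Eulerian statement from a reduction to a one-dimensional transport problem.

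\textbf{Lagrangian part.} I would imitate the proof of Theorem~\ref{theo:1d_tran_ko} directly. For every $(\theta,t)$ the Lagrangian state decomposes as
\[
\begin{pmatrix}\chi\\ \zeta\\ u\end{pmatrix}
=\begin{pmatrix}\hat{x}\\ \hat{y}\\ u_0(\hat{x},\hat{y})\end{pmatrix}
+t\cos\theta\begin{pmatrix}1\\0\\0\end{pmatrix}
+t\sin\theta\begin{pmatrix}0\\1\\0\end{pmatrix}.
\]
Hence $\mathcal{M}_L$ lies in the span $\mathcal{W}$ of these three fixed elements of $L^2([-1,1]^2;\mathbb{R}^3)$, and $\dim\mathcal{W}\le 3$. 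For $n\ge 3$, enlarge $\mathcal{W}$ to any $n$-dimensional subspace; it contains $\mathcal{M}_L$ exactly, so $d_n(\mathcal{M}_L)=0$.

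\textbf{Eulerian part.} First, monotonicity of the $n$-width under inclusion gives $d_n(\mathcal{M}_E)\ge d_n(\mathcal{M}_0)$, where $\mathcal{M}_0=\{u_0(x-t,y):t\in[0,T]\}$ is the $\theta=0$ slice. It then suffices to bound $d_n(\mathcal{M}_0)$ below by $cn^{-1/2}$.

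The slice is not literally the 1D step problem \eqref{eq:1d-n-width-eulerian}, so I would rederive the lower bound in the style of \cite{Ohlberger2016-limitations}, with the following steps.
\begin{enumerate}
\item Pick $N\approx 2n$ times $t_k=kT/N$.
\item Consider the differences $\psi_k=u(\cdot,t_k)-u(\cdot,t_{k-1})$. Each is supported on the symmetric difference of two disks of radius $r$ shifted by $h=T/N$. These regions lie in consecutive thin crescents, so for $k$ of fixed parity they have pairwise disjoint supports. Since $r<1/T$, the disks stay inside the domain for the relevant range of $t$.
\item Estimate $\|\psi_k\|_2^2=|\text{symmetric difference}|\approx 4rh$.
\item Normalize $e_k=\psi_k/\|\psi_k\|$ to get an orthonormal family of about $N/2$ functions, each a difference of two manifold elements scaled by about $(rh)^{-1/2}$.
\item Apply the standard argument. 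For any $n$-dimensional $\mathcal{V}_n$ with orthogonal projector $P$, an orthonormal family of $m$ vectors satisfies $\sum_k\|e_k-Pe_k\|^2\ge m-n$. Hence some $k$ has $\|\psi_k-P\psi_k\|^2\ge \|\psi_k\|^2(m-n)/m\gtrsim rh$.
\item By the triangle inequality, one of $u(\cdot,t_k)$ or $u(\cdot,t_{k-1})$ has distance at least $\tfrac12\sqrt{crh}$ from $\mathcal{V}_n$. With $h\sim T/n$ this is at least $c\,n^{-1/2}$.
\end{enumerate}

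\textbf{Main obstacle.} The hard step is the geometric measure estimate for disk translates. It needs two things: disjointness of the crescent supports for alternating indices, and a uniform lower bound $|\,D\triangle(D+he_1)|\ge c\,rh$ for small $h$. The latter follows because the area equals $2(\pi r^2-\text{lens area})$, which is $4rh+O(h^3)$.

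For the disjointness, I would use every other index. Alternatively, I would avoid it entirely by restricting to horizontal lines $y=\text{const}$ with $|y|<r/2$. On each such line the solution is an indicator of an interval, and the 1D bound can be integrated in $y$ after choosing $\mathcal{V}_n$-independent test functions. I expect the crescent disjointness with alternating indices to work directly, and would try it first.
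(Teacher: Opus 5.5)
Your Lagrangian argument and the monotonicity reduction to the $\theta=0$ slice are the same as the paper's proof. The paper then simply invokes \eqref{eqn:1d_trans_ko} for the slice, so your re-derivation addresses a step the paper glosses over. That re-derivation, however, fails at step 2.

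Write $D_k$ for the disk of radius $r$ centred at $(t_k,0)$, and set $R_k=D_k\setminus D_{k-1}$ and $L_k=D_{k-1}\setminus D_k$, so that $\psi_k=\mathbf{1}_{R_k}-\mathbf{1}_{L_k}$. The $R_k$ are pairwise disjoint, and so are the $L_k$. But $R_k\cap L_j$ has positive area for every $j\ge k+2$ with $t_j-t_k<2r$, whatever the parity of $j-k$. To see this, look at the horizontal line at height $y$ and put $w=\sqrt{r^2-y^2}$. On that line the two sets are the intervals $(t_{k-1}+w,\,t_k+w]$ and $[t_{j-1}-w,\,t_j-w)$. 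These overlap as soon as $|2w-(t_j-t_k)|<h$, and $w$ takes every value in $(0,r)$. Geometrically, the overlap sits at the crossing points of $\partial D_k$ and $\partial D_j$. Hence $\langle\psi_k,\psi_j\rangle=-|R_k\cap L_j|<0$, the $e_k$ are not orthonormal, and the bound $\sum_k\|Pe_k\|^2\le n$ in step 5 is unjustified. Your fallback via horizontal lines has the same defect: each line carries an interval with two jumps, and the two jumps sweep overlapping regions once the translation exceeds $2w(y)$.

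\medskip

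The gap can be repaired without orthogonality, in either of two ways.
\begin{itemize}
\item \emph{Gram-matrix bound.} For $j\ne k$ you have $\langle\psi_k,\psi_j\rangle=-|R_k\cap L_j|-|L_k\cap R_j|$. Since the $L_j$ are disjoint and the $R_j$ are disjoint, $\sum_{j\ne k}|\langle\psi_k,\psi_j\rangle|\le|R_k|+|L_k|=\|\psi_k\|^2$. All $\|\psi_k\|$ are equal, so the Schur test gives $\|G\|\le 2$ for the Gram matrix $G$ of the $e_k$. For an orthonormal basis $\{v_i\}$ of $\mathcal{V}_n$ this yields $\sum_k\|Pe_k\|^2=\sum_{i=1}^n\sum_k|\langle e_k,v_i\rangle|^2\le n\|G\|\le 2n$. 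Using all $N=4n$ differences, some $k$ then satisfies $\|(I-P)e_k\|^2\ge\tfrac12$, and step 6 goes through.
\item \emph{Restriction to a subregion.} Restrict to $\{x\ge 0,\ |y|<r/2\}$; restriction is a linear contraction, so it cannot increase $d_n$. Also restrict to $t\in[0,\tau]$ with $\tau<\min\{\tfrac{\sqrt3}{2}r,\,1-r\}$. There each slice is the single step $\mathbf{1}_{\{x\le t+w(y)\}}$, and the differences are genuinely disjoint strips of area $rh$.
\end{itemize}

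Two smaller points.
\begin{itemize}
\item With $N\approx 2n$ and every other index you keep only $m\approx n$ vectors, so $(m-n)/m\approx 0$. You need $m\ge 2n$.
\item The hypothesis $r<1/T$ does not keep the disks inside $[-1,1]^2$. For $T=0.1$ and $r=2$, every element of $\mathcal{M}_E$ is identically $1$, so $d_n(\mathcal{M}_E)=0$. A containment hypothesis must be imposed explicitly, for example $r+T\le 1$, or $r<1$ together with a shortened time window. The paper's identification of the slice with the 1D problem tacitly relies on the same assumption.
\end{itemize}
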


\begin{proof}

In the Eulerian frame, we have
\[
\begin{aligned}
d_n(\mathcal{M}_E)
&\ge d_n(\mathcal{M}_E(\theta=0))\\
&= \inf_{\mathcal{V}_n}\sup_{\substack{\theta=0 \\ t\in[0,T]}} \inf_{v\in \mathcal{V}_n}  \| u(\cdot,t,\theta) - v(\cdot)\|_{L^2([-1,1]^2)}\\
&= \inf_{\mathcal{V}_n}\sup_{t\in[0,T]} \inf_{v\in \mathcal{V}_n}  \| u(\cdot,t,\theta=0) - v(\cdot)\|_{L^2([-1,1]^2)}\\
&= d_n(\{u_0(x-t,y):t\in[0,T]\}) \\
& \ge cn^{-1/2}
\end{aligned}
\]
The first inequality follows from the monotonicity of the Kolmogorov $n$-width since 
$\mathcal{M}_E(\theta=0)\subset\mathcal{M}_E$. 
The last inequality is a direct consequence of inequality~\eqref{eqn:1d_trans_ko}.

For the Lagrangian frame, we still consider the vector-valued function
\[
\begin{pmatrix}
\chi \\ \zeta \\ u
\end{pmatrix} 
=
\begin{pmatrix} \hat{x} \\ \hat{y}\\ u_0(\hat{x},\hat{y})\\ \end{pmatrix} 
+\begin{pmatrix} t\cos\theta \\ 0\\ 0\\ \end{pmatrix} 
+
\begin{pmatrix} 0 \\ t\sin\theta\\ 0\\ \end{pmatrix}
\in
\text{span}\left\{ \begin{pmatrix}\hat{x}\\ \hat{y}\\ u_0\end{pmatrix},
    \begin{pmatrix}1 \\ 0\\ 0\end{pmatrix},
    \begin{pmatrix}0 \\ 1\\ 0\end{pmatrix}
    \right\}
\]
Hence, the Kolmogorov $n$-width in the Lagrangian frame is 0 when $n\ge 3$.
\end{proof}

\subheading{1D advection-diffusion problem.} We now include the diffusion term and consider the one-dimensional advection–diffusion equation

\begin{subequations}
\label{eq:1d-n-width-eulerian-with-diffusion}
\begin{align}
&\frac{\partial u}{\partial t}+\frac{\partial u}{\partial x}=\mu \frac{\partial^2 u}{\partial x^2},
\qquad x\in\mathbb{R},\ t\ge 0, \\
&u(x,0)=(G(\cdot,\sigma_0)\ast \phi)(x)
 =\int_{\mathbb{R}} G(x-y,\sigma_0)\,\phi(y)\,dy, \\
&G(x,t)=\frac{1}{\sqrt{4\pi\mu t}}
  \exp\!\left(-\frac{x^2}{4\mu t}\right), \\
&\phi(x)=H(x-0.35)-H(x-0.65)
 =\begin{cases}
1, & 0.35<x<0.65, \\
0, & \text{otherwise}.
\end{cases}
\end{align}
\end{subequations}
where $H(x)$ denotes the Heaviside function and $G(x,t)$ is the heat kernel. Here, 
$\mu=10^{-4}$ is the diffusion coefficient.

In the Eulerian frame, the solution is given by $u(x,t)=w(x-t,t)$ where $w(x,t)=(G(\cdot,t+\sigma_0)\ast \phi)(x)$ is the analytic solution of heat equation $w_t=\mu w_{xx}$. The solution manifold is defined as

\[
\mathcal{M}_E=\{F(\frac{x-t-0.35}{\sqrt{2\mu(t+\sigma_0)}})-F(\frac{x-t-0.65}{\sqrt{2\mu(t+\sigma_0)}}): t\in[0,T]\}\subset L^2(\mathbb{R})
\]
where $t$ is the parameter and $F(x)=\frac{1}{\sqrt{2\pi}}\int_{-\infty}^x\exp(-s^2/2)ds$ is the cumulative distribution function. 
The solution manifold is still a family of translated window functions, but each solution is smoothed by a Gaussian kernel due to diffusion. Moreover, each solution belongs to $C^\infty(\mathbb{R})$. The frontier propagates with constant velocity, while the front thickness $d$ grows slowly as $d \sim \sqrt{\mu t}$. For small times $t$ and diffusion coefficient $\mu$, the singular-value decay of the solution matrix remains comparable to that of the pure translation problem.

In the Lagrangian frame, the governing equation becomes:
\[
\left\{\begin{aligned}
&\frac{d\chi}{dt} = 1\\
&\frac{du}{dt}|_{x=\chi} = \mu u_{xx}|_{x=\chi} \\
\end{aligned}\right.
\implies
\left\{\begin{aligned}
&\chi(\hat{x},t) = \hat{x}+t \\
& u(\chi,t)= (G(\cdot,t+\sigma_0)\ast \phi)(\hat{x})
\end{aligned}\right.
\]
The corresponding Lagrangian solution manifold is
\[
\mathcal{M}_L =
\left\{
\begin{pmatrix}
\chi \\ u
\end{pmatrix} 
:\chi=\hat{x}+t,u=F(\frac{\hat{x}-0.35}{\sqrt{2\mu(t+\sigma_0)}})-F(\frac{\hat{x}-0.65}{\sqrt{2\mu(t+\sigma_0)}})\right\}
\]
where $t\in[0,T]$ is the varying parameter.

Along the characteristic line $\chi$, the advection-diffusion problem reduces to a pure diffusion problem. Therefore, the approximation properties of $\mathcal{M}_L$ are governed primarily by diffusion effects. Consequently, the Kolmogorov $n$-width $d_n(\mathcal{M}_L)$ exhibits an exponential decay rate comparable to the decay rate of a heat equation.

\begin{theorem}
    In the Eulerian frame, for sufficiently small $T,\mu>0$, there exists a constant $c_2>0$ and $c>0$ such that for all $n\in\mathbb{N}^*$,
    \begin{align}
        & d_n(\mathcal{M}_E)\geq c_2 n^{-1/2} - \sqrt{c}(\mu(T+\sigma_0))^{1/4}
    \end{align}
    
In the Lagrangian frame, for all $n>2$,
    \[
    d_n(\mathcal M_L) \le C\frac{q^{n-2}}{1-q},\;\text{where}\;q=\frac{T/2}{T/2+\sigma_0}.
    \]
\end{theorem}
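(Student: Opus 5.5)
For the Eulerian bound, the plan is a perturbation argument that compares $\mathcal{M}_E$ with the pure-translation manifold $\mathcal{M}_0=\{\phi(\cdot-t):t\in[0,T]\}$. By \cite{Ohlberger2016-limitations}, as in Theorem~\ref{theo:1d_tran_ko}, $\mathcal{M}_0$ satisfies $d_n(\mathcal{M}_0)\ge c_2 n^{-1/2}$. Here $\phi$ is a window with two jumps, and on a time interval of length $T$ its translates behave like translates of a Heaviside function. This still gives the $n^{-1/2}$ rate after rescaling, with $c_2$ depending on $T$, because the two jumps are separated by $0.3>T$ for small $T$.

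The key elementary fact is that Kolmogorov widths are $1$-Lipschitz under uniform perturbation of the set. If every $u\in\mathcal{M}_E$ has a partner $\tilde u\in\mathcal{M}_0$ with $\|u-\tilde u\|\le\varepsilon$, then $d_n(\mathcal{M}_0)\le d_n(\mathcal{M}_E)+\varepsilon$. Indeed, for any $\mathcal{V}_n$ we have $\operatorname{dist}(\tilde u,\mathcal{V}_n)\le\operatorname{dist}(u,\mathcal{V}_n)+\varepsilon$.

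So it remains to bound $\|w(\cdot-t,t)-\phi(\cdot-t)\|_{L^2}=\|G(\cdot,t+\sigma_0)\ast\phi-\phi\|_{L^2}$ uniformly in $t\le T$. We write $\phi$ as a difference of two Heavisides, so it suffices to treat $\|F(\cdot/s)-H\|_{L^2}^2$ with $s=\sqrt{2\mu(t+\sigma_0)}$. The substitution $x=sy$ gives $\|F(\cdot/s)-H\|_{L^2}^2=s\int_{\mathbb{R}}(F(y)-H(y))^2dy=c' s$, which is finite by Gaussian tails. Combining the two jumps gives an error of at most $\sqrt{c}\,(\mu(t+\sigma_0))^{1/4}\le\sqrt{c}\,(\mu(T+\sigma_0))^{1/4}$. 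This yields the stated lower bound, and I expect this half to be routine.

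For the Lagrangian bound, the first step mirrors Theorem~\ref{theo:1d_tran_ko}. We split $(\chi,u)^T=(\hat x,0)^T+t(1,0)^T+(0,v(\cdot,t))^T$, where $v(\cdot,t)=G(\cdot,t+\sigma_0)\ast\phi$. Hence $d_n(\mathcal{M}_L)\le d_{n-2}(\{v(\cdot,t):t\in[0,T]\})$, and it remains to show that the heat-semigroup orbit has width at most $Cq^{m}/(1-q)$ with $m=n-2$.

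The plan is a Taylor expansion in time about the center $t_0=T/2$. Since $v(t)=e^{(t+\sigma_0)\mu\partial_{xx}}\phi$, we have $v(t)=\sum_k \frac{(t-T/2)^k}{k!}\partial_t^k v(T/2)$. We take $\mathcal{V}_m=\operatorname{span}\{\partial_t^k v(T/2)\}_{k<m}$, so the error is the tail $\sum_{k\ge m}\frac{|t-T/2|^k}{k!}\|\partial_t^k v(T/2)\|$. Analyticity of the heat semigroup gives $\|(\mu\partial_{xx})^k e^{\tau\mu\partial_{xx}}\phi\|\le (k/(e\tau))^k\|\phi\|$; one can also verify this in Fourier via $\sup_\xi (\mu\xi^2)^k e^{-\tau\mu\xi^2}=(k/(e\tau))^k$. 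Here $\tau=T/2+\sigma_0$. With $k!\ge (k/e)^k$, the $k$-th term is bounded by $\|\phi\|\bigl(\frac{T/2}{T/2+\sigma_0}\bigr)^k=\|\phi\|q^k$. Summing the geometric tail gives $\|\phi\|q^m/(1-q)$, i.e. $C\,q^{n-2}/(1-q)$ with $C=\|\phi\|_{L^2}$, and $q<1$ because $\sigma_0>0$.

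The main obstacle I expect is justifying this analyticity estimate and the convergence of the Taylor series in $L^2$ uniformly on $[0,T]$. The Fourier computation handles both, since the series converges exactly when $|t-T/2|<T/2+\sigma_0$, which always holds. A secondary concern is that the constant must be independent of $n$, which the bound $k!\ge(k/e)^k$ secures.
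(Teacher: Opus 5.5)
Your proposal is correct and follows essentially the same route as the paper. In the Eulerian frame you compare $\mathcal{M}_E$ with the pure-translation manifold $\{\phi(\cdot-t)\}$ via the scaling identity $\|F(\cdot/s)-H\|_{L^2}^2=s\int_{\mathbb{R}}(F-H)^2$ and the $1$-Lipschitz stability of $d_n$; in the Lagrangian frame you split off the two-dimensional $\chi$-component and bound the heat-orbit width by the Fourier-side Taylor expansion about $t^*=T/2$, using $\sup_{y\ge 0} y^k e^{-y}= k^k e^{-k}\le k!$. One small wording fix: the inequality $d_n(\mathcal{M}_0)\le d_n(\mathcal{M}_E)+\varepsilon$ needs every element of $\mathcal{M}_0$ to have an $\varepsilon$-close partner in $\mathcal{M}_E$ (not the reverse, as you wrote), which your same-$t$ pairing supplies because it is a bijection.
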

\begin{proof}
    \subheading{In the Eulerian frame}, consider an auxiliary solution manifold 
    \[
    M^{\textrm{ref}}_E:=\{\phi(x-t),t\in[0,T]\}
    \]
    From equation~\eqref{eq:1d-n-width-eulerian-with-diffusion}, recall that $\phi(x)=H(x-0.35)-H(x-0.65)$, where $H(x)$ is the Heaviside function. For any $t'\in [0,T], u(x,t')\in\mathcal{M}_E$, we have
    \begin{align*}
    & \|u(x,t')-\phi(x-t')\|^2 =\|(G(\cdot,t'+\sigma_0)\ast\phi)(x)-\phi(x-t')\|^2 \\
    =& \| [F(\frac{\hat{x}-t'-0.35}{\sqrt{2\mu(t'+\sigma_0)}})-F(\frac{\hat{x}-t'-0.65}{\sqrt{2\mu(t'+\sigma_0)}})] - [H(\hat{x}-t'-0.35)-H(\hat{x}-t'-0.65)]\|^2 \\
    \le& 2 \| F(\frac{\hat{x}}{\sqrt{2\mu(t'+\sigma_0)}}) - H(\hat{x}) \|^2=2\int_{-\infty}^\infty [F(\frac{\hat{x}}{\sqrt{2\mu (t'+\sigma_0)}}) - H(\hat{x})]^2d\hat{x} \\
    =& 2\sqrt{2\mu (t'+\sigma_0)}\int_{-\infty}^\infty [F(s) - H(\sqrt{2\mu (t'+\sigma_0)}s)]^2ds.
    \end{align*}
    Since the Heaviside function satisfies that $H(s)=H(as)$ for any $a>0$, we have
    \[
    \begin{aligned}
    &\|u(x,t')-\phi(x-t')\|^2 \\
    \leq&2\sqrt{2\mu (t'+\sigma_0)} \int_{-\infty}^\infty [F(s) - H(s)]^2ds \\
    =&2\sqrt{2\mu (t'+\sigma_0)} \cdot \frac{\sqrt{2}-1}{\sqrt{\pi}} =\frac{4-2\sqrt{2}}{\sqrt{\pi}} \sqrt{\mu (t'+\sigma_0)}
    \le c \sqrt{\mu(T+\sigma_0)}.
    \end{aligned}
    \] where the constant $c=\frac{4-2\sqrt{2}}{\sqrt{\pi}}$. 
    As a result,
    \[\max\{\sup_{u^{\textrm{ref}}\in\mathcal{M}_E^{\textrm{ref}}}\inf_{u\in\mathcal{M}_E}\| u^{\textrm{ref}}-u\|,\sup_{u\in\mathcal{M}_E}\inf_{u^{\textrm{ref}}\in\mathcal{M}^{\textrm{ref}}_E}\| u^{\textrm{ref}}-u\|\} \le \sqrt{c}(\mu(T+\sigma_0))^{1/4}.
    \]
    
    For any $u\in\mathcal{M}_E$ and $n$-dimensional linear subspace $\mathcal{V}_n$, we have
    \[\begin{aligned}
    &\inf_{v\in\mathcal{V}_n}\|u-v\| \le \|u-u^{\textrm{ref}}\|+\inf_{v\in\mathcal{V}_n}\|u^{\textrm{ref}}-v\|, \quad \forall v\in\mathcal{V}_n \\
    \implies 
    & \sup_{u\in\mathcal{M}_E}\inf_{v\in\mathcal{V}_n}\|u-v\| \le \sqrt{c}(\mu (T+\sigma_0))^{1/4} +\sup_{u^{\textrm{ref}}\in\mathcal{M}_E^{\textrm{ref}}}\inf_{v\in\mathcal{V}_n}\|u^{\textrm{ref}}-v\|\\
    \implies & \inf_{\mathcal{V}_n}\sup_{u\in\mathcal{M}_E}\inf_{v\in\mathcal{V}_n}\|u-v\| \le \sqrt{c}(\mu (T+\sigma_0))^{1/4}+\inf_{\mathcal{V}_n}\sup_{u^{\textrm{ref}}\in\mathcal{M}_E^{\textrm{ref}}}\inf_{v\in\mathcal{V}_n}\|u^{\textrm{ref}}-v\|
    \end{aligned}\]
    As a consequence,
    \[
    d_n(\mathcal{M}_E)\le \sqrt{c}(\mu (T+\sigma_0))^{1/4} + d_n(\mathcal{M}_E^{\textrm{ref}}).
    \] Similarly, we can prove that $ d_n(\mathcal{M}_E^{\textrm{ref}})\le \sqrt{c}(\mu (T+\sigma_0))^{1/4} + d_n(\mathcal{M}_E)$. Hence,
    \[
    |d_n(\mathcal{M}_E)-d_n(\mathcal{M}_E^{\textrm{ref}})|\le \sqrt{c}(\mu(T+\sigma_0))^{1/4}
    \]

    Thus, we have
    \begin{equation}
        d_n(\mathcal{M}_E)\geq d_n(\mathcal{M}_E^{\textrm{ref}})- \sqrt{c}(\mu (T+\sigma_0))^{1/4}=c_2 n^{-1/2} - \sqrt{c}(\mu(T+\sigma_0))^{1/4}.
    \end{equation}
where the estimate $d_n(\mathcal{M}_E^{\textrm{ref}})\geq c_2 n^{-1/2}$ follows from equation~\eqref{eq:1d-adv-dn}.
    
    \subheading{In the Lagrangian frame}, the state variable satisfies $u(\hat x,t):=w(\hat x,t)$ where $w(\cdot,t):=(G(\cdot,t+\sigma_0)*\phi)(\cdot)$ is the analytic solution of heat equation. Since trajectory component $\chi(\cdot,t)\in\mathrm{span}\{1,\hat x\}$, it contributes at most $2$ dimensions. Therefore, for $n>2$,
    \[
    d_n(\mathcal M_L) = d_{n-2}(\mathcal{M}_L^w),
    \text{ where }
    \mathcal{M}_L^w:=\{w(\cdot,t):t\in[0,T]\}\subset L^2(\mathbb{R}).
    \]
    So, we only need to estimate the $n$-width for the $u$-component.
    
    \subheading{Step 1: Fourier representation.}
    Let $\mathcal F:L^2(\mathbb R_x)\to L^2(\mathbb R_\xi)$ be the unitary  Fourier transform $\|f\|_{L^2_x}=\|\widehat f\|_{L^2_\xi}$.
    Consequently, for any compact set $K\subset L^2$,
    \[
    d_n(K)=d_n(\mathcal F K),
    \]
    since $\mathcal F$ preserves all distances $\|\mathcal{F}(f)-\mathcal{F}(g)\|_{L^2_\xi} = \|f-g\|_{L^2_x}$. Hence it suffices to bound the $n$-width in the Fourier domain
    \[
    \widehat{\mathcal M}^w_L:=\{\widehat w(\cdot,t):t\in[0,T]\}\subset L^2(\mathbb R_\xi).
    \]
    The Fourier transform of  $w(\cdot,t)$ is
    \[
    \widehat w(\xi,t)=e^{-\mu (t+\sigma_0)\xi^2}\,\widehat\phi(\xi).
    \]
    \subheading{Step 2: approximation error of Taylor polynomials.}
    Fix $t^*:=T/2$. We show that, for every $\xi\in\mathbb{R}$, the Taylor series of $\widehat{w}(\xi,t)$ about $t=t^*$ converges:
\[
\widehat{w}(\xi,t)
=
\sum_{k=0}^\infty \frac{\partial_t^k \widehat{w}(\xi,t^*)}{k!}(t-t^*)^k
=
\sum_{k=0}^\infty
\frac{(-\mu\xi^2)^k}{k!}
e^{-\mu(t^*+\sigma_0)\xi^2}\widehat{\phi}(\xi)\,(t-t^*)^k.
\]
Define $y:=\mu(t^*+\sigma_0)\xi^2.$ Then,
\[
\frac{(\mu\xi^2)^k}{k!}e^{-\mu(t^*+\sigma_0)\xi^2}
=
\frac{1}{(t^*+\sigma_0)^k}\frac{y^k e^{-y}}{k!}.
\]
The function $y\mapsto y^k e^{-y},\;y\in[0,\infty)$ attains its maximum  at $y=k$, so
\[
\frac{y^k e^{-y}}{k!}
\le
\frac{k^k e^{-k}}{k!}.
\]
Now, by the Stirling lower bound \cite{robbins1955remark},
\[
k!\ge \sqrt{2\pi k}\left(\frac{k}{e}\right)^k,
\qquad k\ge 1,
\]
it follows that
\[
\frac{k^k e^{-k}}{k!}
\le
\frac{1}{\sqrt{2\pi k}}
\le 1
\Rightarrow
\frac{(\mu\xi^2)^k}{k!}e^{-\mu(t^*+\sigma_0)\xi^2}
\le
\frac{1}{(t^*+\sigma_0)^k}.
\]
In addition, using $t^*=\frac{T}{2}$ and $t\in[0,T]$, we have
\[
\left\|
\frac{\partial_t^k\widehat{w}(\xi,t^*)}{k!}(t-t^*)^k
\right\|_{L^2_\xi}
\le
\frac{\|\widehat{\phi}\|_{L^2_\xi}}{(t^*+\sigma_0)^k}|t-t^*|^k
\le
\frac{(T/2)^k}{(T/2+\sigma_0)^k}\|\phi\|_{L^2}. 
\]
Hence,
\[\left\|\sum_{k=0}^\infty\frac{\partial_t^k\widehat{w}(\xi,t^*)}{k!}(t-t^*)^k\right\|_{L^2_\xi} \leq \|\phi\|_{L^2}\sum_{k=0}^\infty(\frac{T/2}{T/2+\sigma_0})^k=\|\phi\|_{L^2}\frac{T/2+\sigma_0}{\sigma_0}<\infty.
\]
As a result, the series
$\widehat{w}(\xi,t)
=
\sum_{k=0}^\infty\frac{\partial_t^k\widehat{w}(\xi,t^*)}{k!}(t-t^*)^k
$
converges absolutely in $L^2_\xi$, uniformly for $t\in[0,T]$. Therefore, the Taylor polynomial of $n-1$ degree
$p_{n-1}(\cdot,t):=
\sum_{k=0}^{n-1}
\frac{\partial_t^k\widehat{w}(\xi,t^*)}{k!}(t-t^*)^k$
satisfies
\[
\begin{aligned}
\|\widehat w(\cdot,t)-p_{n-1}(\cdot,t)\|_{L^2_\xi} 
\le
\sum_{k=n}^\infty
\left\|
\frac{\partial_t^k\widehat{w}(\xi,t^*)}{k!}(t-t^*)^k
\right\|_{L^2_\xi} 
\le
\sum_{k=n}^\infty \|\phi\|_{L^2} q^k
=
\frac{\|\phi\|_{L^2}}{1-q}\,q^n,
\end{aligned}
\]
where $q=(T/2)/(T/2+\sigma_0)\in(0,1).$

\subheading{Step 3: from approximation error to an upper bound of the $n$-width.} Define the $n$-dimensional subspace
    \[
    W_n:=\mathrm{span}\{\widehat{w}(\xi,t^*),\partial_t\widehat{w}(\xi,t^*),\frac{\partial^2_t\widehat{w}(\xi,t^*)}{2},\cdots,\frac{\partial^{n-1}_t\widehat{w}(\xi,t^*)}{(n-1)!}\}\subset L^2(\mathbb R_\xi).
    \]
    then $p_{n-1}(t)\in W_n$ for every $t$. Hence
    \[
    \inf_{v\in W_n}\|\widehat w(\cdot,t)-v\|_{L^2_\xi}
    \le \|\widehat w(\cdot,t)-p_{n-1}(t)\|_{L^2_\xi}.
    \]
    The definition of $d_n$ gives
    \[\begin{aligned}
    d_n(\widehat{\mathcal M}^w_L)
    & =\inf_{\dim(V)=n}\sup_{t\in[0,T]}\inf_{v\in V}\|\widehat w(\cdot,t)-v\|_{L^2_\xi} \\
    & \le \sup_{t\in[0,T]}\inf_{v\in W_n}\|\widehat w(\cdot,t)-v\|_{L^2_\xi}
    \le \frac{\|\phi\|_{L^2}}{1-q}\,q^n.
    \end{aligned}\]
    By the $L^2$-isometry of $\mathcal F$, this implies the same bound for $d_n(\mathcal M^w_L)$. Thus 
    \[
    d_n(\mathcal{M}_L)=d_{n-2}(\mathcal{M}^w_L) \le \frac{\|\phi\|_{L^2}}{1-q}\,q^{n-2},\;0<q=\frac{T/2}{T/2+\sigma_0}<1.
    \]
\end{proof}

\subsection{Strong correlation in the Lagrangian frame}\label{subsec:lag_interp}
In transport-dominated dynamics, Eulerian snapshots primarily differ by spatial shifts. As these shifts increase, the overlap between snapshots diminishes, leading to weak correlation between training and future solutions. Consequently, prediction in the Eulerian frame becomes an extrapolation toward uncorrelated, out-of-distribution states. In contrast, the Lagrangian frame follows characteristic trajectories and encodes shifts into the coordinate variable, effectively aligning future and historical solutions. As a result, the remaining temporal variation reflects mainly changes in shape or amplitude rather than position, which preserves strong correlation across time. Hence, prediction in the Lagrangian frame is closer to interpolation between correlated states than to extrapolation toward uncorrelated out-of-distribution states.

We quantify this effect using a coherence coefficient, defined as the maximal normalized correlation between a snapshot at time $t$ and the training set:
\begin{equation}\label{eq:coherence}
\gamma_\star(t)
=
\max_{s\in[0,T_{\text{train}}]}
\frac{\big|\langle w_\star(\cdot,t),\,w_\star(\cdot,s)\rangle\big|}
{\|w_\star(\cdot,t)\|\,\|w_\star(\cdot,s)\|},\qquad \star\in\{E,L\},
\end{equation}
where $\langle\cdot,\cdot\rangle$ and $\|\cdot\|$ denote the spatial $L^2$ inner product and norm (implemented as discrete dot products on the fixed grid). Here, $w_E(\cdot,t):=u(\cdot,t)$ is the Eulerian snapshot, while $w_L(\cdot,t)=(\chi(\cdot,t),u(\cdot,t))$ denotes the corresponding Lagrangian representation.

\begin{figure}[htbp]
    \centering
    \includegraphics[width=0.5\textwidth]{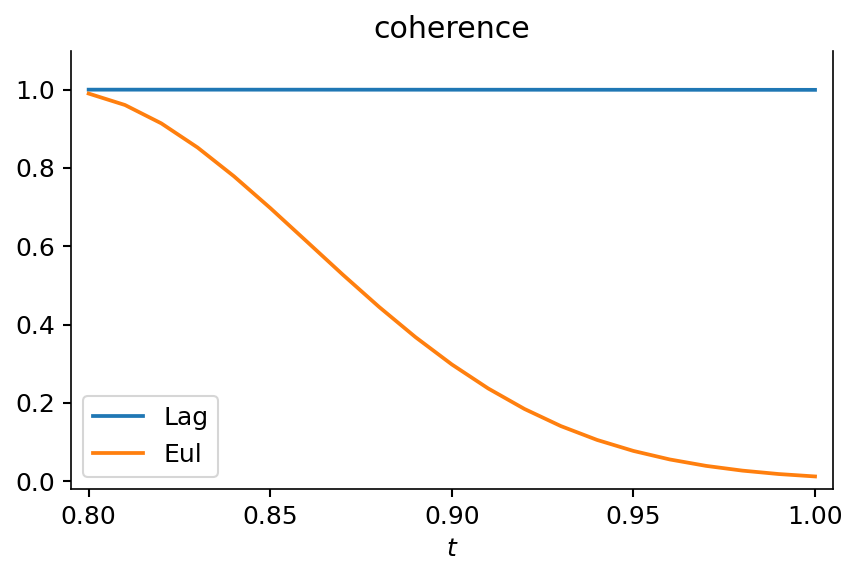}
    \caption{Coherence in $(T_{\text{train}},T]$. The Eulerian-frame coherence decays rapidly in the forecast, indicating weak correlation with the training set (extrapolation), whereas the Lagrangian coherence remains high due to alignment (closer to interpolation).}
    \label{fig:1d_adv_coherence}
\end{figure}

Figure~\ref{fig:1d_adv_coherence} reports $\gamma_E(t)$ and $\gamma_L(t)$ on the prediction window $t\in(T_{\text{train}},T]$ (with $T_{\text{train}}=0.8$ and $T=1$). The coherence in the Eulerian frame decays rapidly toward zero, indicating that future snapshots become nearly uncorrelated with any training snapshot. Hence, prediction in the Eulerian frame is effectively an out-of-distribution extrapolation problem. In contrast, the Lagrangian coherence remains high, implying that future states stay close to structures already observed during training.

Therefore, for transport-dominated systems, prediction in the Lagrangian frame is closer to an interpolation between correlated states rather than an extrapolation towards uncorrelated out-of-distribution states. 
\section{Lagrangian ROMs}\label{sec:lagrom}
As discussed in Section~\ref{sec:motivation}, representing data for transport-dominated dynamics in the Lagrangian frame can potentially improve both compression ability and future prediction performance, since the Kolmogorov $n$-width decays faster and the solutions exhibit stronger correlations in it. POD and DMD have been extended to the Lagrangian frame in Lagrangian POD~\cite{Mojgani2017-lagpod} and Lagrangian DMD~\cite{Lu2020-lagdmd,lu2021dynamic}, respectively. To the best of our knowledge, an autoencoder-based ROM in the Lagrangian frame has not been developed. In this paper, we extend the Eulerian non-intrusive autoencoder ROM based on parametric DMD  \cite{Duan2024-cae-hodmd} to the Lagrangian frame. A byproduct of this development is a Lagrangian parametric DMD, which can be seen as a Lagrangian extension of parametric DMD in \cite{Andreuzzi2023-pdmd-partition-stack} or a parametric extension of the Lagrangian DMD in \cite{Lu2020-lagdmd,lu2021dynamic}. 

Specifically, the Lagrangian autoencoder ROM learns a nonlinear latent manifold from coherent augmented snapshots in the Lagrangian frame offline. Fast online future prediction for unseen parameters can be performed using interpolation-based parametric DMD \cite{Andreuzzi2023-pdmd-partition-stack}.

In this section, we first outline the Lagrangian representation of solution data in Section~\ref{subsec:lag_data}, 
and then introduce Lagrangian autoencoder and Lagrangian parametric DMD in Section~\ref{subsec:lag_ae_rom} and Section~\ref{subsec:lag_pod_rom}.

\subsection{Lagrangian data}\label{subsec:lag_data}
We first describe the construction of Lagrangian snapshots. For each parameter $\mu\in\mathcal{P}\subset\mathbb{R}^{d_\mu}$, the characteristic line $\chi(\hat{x},t;\mu)$ transports a reference coordinate $\hat{x}\in\Omega$ to  a new location $\chi$ at time $t$. We denote the Lagrangian coordinate and solution as $\chi(\hat{x},t;\mu)$ and $u\bigl(\chi(\hat{x},t;\mu),t;\mu\bigr).$
Let $\{\mu_i\}_{i=1}^{n_p}\subset\mathcal{P}$ and $\{t_k\}_{k=0}^{n_t}\subset[0,T]$ denote the training parameter samples and time instances. For each $(\mu_i,t_k)$, we form an augmented Lagrangian snapshot
\begin{equation}\label{eq:lag_snapshot_stack}
m^{\mu_i,t_k} = \mathrm{stack}\!\left(\chi^{\mu_i,t_k},\,u^{\mu_i,t_k}\right),
\end{equation}
where $\chi^{\mu_i,t_k}$ and $u^{\mu_i,t_k}$ denote the Lagrangian coordinates and solutions. The stacking operator is method-dependent and will be specified for each Lagrangian ROM. In contrast, a conventional Eulerian representation stores only $u(x,t_k;\mu_i)$ on a fixed physical grid.

\subsection{Lagrangian nonlinear ROM}\label{subsec:lag_ae_rom}
In this section, we present the Lagrangian convolutional autoencoder (LagCAE), a novel architecture specifically designed to exploit Lagrangian snapshots. This method can be viewed as an extension of classical autoencoder-based ROM in \cite{Lee2020-cae-rom,Duan2024-cae-hodmd} from the Eulerian frame to the Lagrangian frame to better leverage coherent structures of transport-dominated systems encoded in the Lagrangian frame.

\subsubsection{Offline training based on LagCAE}\label{subsubsec:lagcae_offline}

\subheading{Data treatment.} For clarity, we describe the one-dimensional case. We denote the discrete Lagrangian coordinate and state vectors by
\[
\chi^{\mu_i,t_k}=(\chi^{\mu_i,t_k}_1,\dots,\chi^{\mu_i,t_k}_{n_s})\in\mathbb{R}^{n_s},\qquad
u^{\mu_i,t_k}=(u^{\mu_i,t_k}_1,\dots,u^{\mu_i,t_k}_{n_s})\in\mathbb{R}^{n_s}.
\]
We assemble them into a two-channel tensor
\[
m^{\mu_i,t_k} = \mathrm{stack}\bigl(\chi^{\mu_i,t_k},u^{\mu_i,t_k}\bigr)
=
\left[\begin{array}{c}
    \rule[.5ex]{1em}{0.4pt}\; \chi^{\mu_i,t_k} \;\rule[.5ex]{1em}{0.4pt} \\
    \rule[.5ex]{1em}{0.4pt}\; u^{\mu_i,t_k} \;\rule[.5ex]{1em}{0.4pt} 
\end{array}\right]
\in \mathbb{R}^{2\times n_s}.
\]
where $\mathrm{stack}(\cdot,\cdot)$ stacks the two arrays along a channel dimension. This representation preserves the coupling between trajectories and solution values, ensuring that compression and reconstruction treat them consistently.

\subheading{Architecture.} We adopt a convolutional autoencoder architecture to learn a low-dimensional nonlinear manifold for augmented Lagrangian data. 
The encoder--decoder pair reads
\[
\mathcal{G}_e(\cdot;\theta_{\mathrm{enc}}):\mathbb{R}^{2\times n_s}\to\mathbb{R}^{r},\qquad
\mathcal{G}_d(\cdot;\theta_{\mathrm{dec}}):\mathbb{R}^{r}\to\mathbb{R}^{2\times n_s},
\]
where the encoder $\mathcal{G}_e$ and decoder $\mathcal{G}_d$ are neural networks with trainable parameters $\theta_{\mathrm{enc}}$ and $\theta_{\mathrm{dec}}$, respectively. Given a snapshot $m^{\mu_i,t_k}$, the encoder $\mathcal{G}_{e}$ compresses it to a latent coordinate
$h^{\mu_i,t_k}=\mathcal{G}_e(m^{\mu_i,t_k};\theta_{\mathrm{enc}})$, while the decoder $\mathcal{G}_d$ reconstructs it from the latent coordinate as
$\widehat{m}^{\mu_i,t_k}=\mathcal{G}_d(h^{\mu_i,t_k};\theta_{\mathrm{dec}})$.

\subheading{Loss.} We train LagCAE by minimizing a combination of value reconstruction and a gradient-based loss term:
\begin{align}
\mathcal{L}_{\mathrm{recon}}
&=\frac{1}{n_pn_t}\sum_{i,k}
\left\|\widehat{m}^{\mu_i,t_k}-m^{\mu_i,t_k}\right\|_2^2,\\
\mathcal{L}_{\mathrm{grad}}
&=\frac{1}{n_pn_t}\sum_{i,k}
\left\|\nabla_{\hat{x}}\widehat{m}^{\mu_i,t_k}-\nabla_{\hat{x}}m^{\mu_i,t_k}\right\|_2^2,
\end{align}
where $\nabla_{\hat{x}}$ denotes discrete differentiation along the reference coordinate $\hat{x}$, implemented as finite differences along the spatial index (e.g., \texttt{torch.gradient}). We note that the differentiation is performed with respect to the uniform reference coordinates $\hat{x}$ rather than on the nonuniform physical grid.

The overall objective is
\begin{equation}\label{eq:loss-lagcae}
\mathcal{L}=\mathcal{L}_{\mathrm{recon}}+\lambda_{\mathrm{grad}}\mathcal{L}_{\mathrm{grad}},\qquad \lambda_{\mathrm{grad}}\ge 0.
\end{equation}
Similar Sobolev-type training objectives have been shown to improve reconstruction quality and generalization in image reconstruction and surrogate modeling \cite{Czarnecki2017-sobolev-training,Deshpande2017-ae-image-gradientloss}. 

\subsubsection{Online prediction via pDMD}\label{subsubsec:pdmd_online}
For online prediction, we adopt a non-intrusive strategy via interpolation-based parametric DMD (pDMD) proposed in \cite{Andreuzzi2023-pdmd-partition-stack,Duan2024-cae-hodmd}. Specifically, pDMD is applied to learn the dynamics of latent coordinates in the reduced latent space. Other parametric extensions of DMD, e.g., \cite{Huhn2023-pdmd-eigen-or-koopman,Song2024-pdmd-winn}, may be leveraged.

\begin{remark}
In addition to pDMD, other online strategies could be considered. For example, we can apply the Galerkin and Petrov-Galerkin projections for autoencoder-based ROM in \cite{Lee2020-cae-rom} or alternative non-intrusive online approaches such as neural-network-based interpolation \cite{fresca2021comprehensive}, Gaussian regression \cite{Bai2021-gaussian-reg,Ortali2021-gaussian-reg} and greedy latent space sparse dynamics identification (gLaSDI) \cite{He2023a-gladi}.
\end{remark}

\subheading{Offline learning of latent dynamics through DMD.} After training, all augmented snapshots are encoded to latent coordinates. 
For each training parameter $\mu_i$, we collect the latent trajectory
\[
H^{\mu_i}=\bigl[h^{\mu_i,t_0},\dots,h^{\mu_i,t_{n_t}}\bigr]\in\mathbb{R}^{r\times (n_t+1)}.
\]
Then, for each parameter, we construct a DMD-based ROM and represent the associated latent dynamics as $\tilde{A}_{\mu_i}\in\mathbb{R}^{r\times r}$ such that $h^{\mu_i,t_{k+1}}\approx \tilde{A}_{\mu_i}h^{\mu_i,t_k}$. Given an initial latent state $h^{\mu_i,t_n}$ corresponding to a parameter $\mu_i$ in the training set, the ROM predicts future latent states as
\[
h^{\mu_i,t_{n+k}}=(\tilde{A}_{\mu_i})^k h^{\mu_i,t_n}.
\]

\subheading{Online prediction through parametric interpolation.} To enable future prediction for an unseen parameter $\mu^*$ and future time $t_{n+k}$, we perform a parametric interpolation in the latent space. First, we use the DMD models constructed for the training parameters to predict latent coordinates $h^{\mu_i,t_{n+k}}$ for each $\mu_i$. Then, we build an interpolant $\mathcal{I}_k(\cdot)$ such that
$\mathcal{I}_k(\mu_i)\approx h^{\mu_i,t_{n+k}}$, and define
\[
h^{\mu^*,t_{n+k}}:=\mathcal{I}_k(\mu^*).
\]
Finally, the predicted future solution in the Lagrangian frame can be obtained through decoding
\[
\widehat{m}^{\mu^*,t_{n+k}}=\mathcal{G}_d\!\left(h^{\mu^*,t_{n+k}};\theta_\mathrm{dec}\right)
=\mathrm{stack}\!\left(\widehat{\chi}^{\mu^*,t_{n+k}},\,\widehat{u}^{\mu^*,t_{n+k}}\right).
\]
In our experiments, $\mathcal{I}_k$ is taken to be radial basis function interpolation. Other interpolation methods (e.g., Gaussian-process regression) may also be used.

\subsubsection{Non-intrusive data-driven ROM}
The overall algorithm of Lagrangian CAE with parametric DMD (LagCAE--pDMD) is summarized in Algorithms~\ref{alg:lagcae_offline}--\ref{alg:lagcae_online}, which describe the offline and online stages, respectively. The most computationally expensive steps are performed in the offline stage, including solving the full-order model in the Lagrangian frame and training the LagCAE. During the online stage, efficient predictions for future times and new parameters can be made through DMD-based latent dynamic evolution and interpolation in the reduced space, followed by a single decoder application. 

\subheading{Improved prediction performance.}
We now assess the predictive performance of the proposed Lagrangian ROM. Returning to the 1D advection problem in Section~\ref{sec:failure}, Figure~\ref{fig:1d_adv_lag} shows that the Lagrangian ROM accurately predicts the future solution, whereas the Eulerian ROM fails to extrapolate and exhibits spurious oscillations and incorrect wavefront locations.

In addition, the latent dynamics is more stable in the Lagrangian frame. As shown in Figure~\ref{fig:1d_adv_latent}, the latent coordinates in the Lagrangian frame evolve smoothly and remain bounded throughout the prediction interval. In contrast, the Eulerian latent trajectories exhibit significant drift, particularly during extrapolation from $t=0.8$ to $t=1.0$. 

\begin{algorithm}[H]
    \caption{LagCAE--pDMD: Offline training.}\label{alg:lagcae_offline}
    \KwIn{Lagrangian snapshots of full-order solution: $\{m^{\mu_i,t_k}\}_{\mu_i,t_k}$}
    \KwOut{Autoencoder $(\mathcal{G}_e,\mathcal{G}_d)$; latent DMD operators $\{\tilde{A}_{\mu_i}\}_{\mu_i}$}
    \begin{enumerate}
            \item Train autoencoder $\mathcal{G}=\mathcal{G}_d\circ\mathcal{G}_e$ by minimizing~\eqref{eq:loss-lagcae} to obtain $\theta_{\mathrm{enc/dec}}$.
            \item Encode all snapshots to obtain latent trajectories $H^{\mu_i} =[\mathcal{G}_e(m^{\mu_i,t_0}),\dots,\mathcal{G}_e(m^{\mu_i,t_{n_t}})]$ for all $\mu_i$.
            \item Construct DMD operator $\tilde{A}_{\mu_i}$ for each parameter based on latent coordinates for each $\mu_i$.
    \end{enumerate}
\end{algorithm}

\begin{algorithm}[H]
    \caption{LagCAE--pDMD: Online prediction.}
    \label{alg:lagcae_online}
    \KwIn{Prediction step $k$ and new parameter $\mu^*$}
    \KwOut{Eulerian solution $\widehat{u}_{E}^{\mu^*,t_{n+k}}$}
    \begin{enumerate}
        \item For each training parameter $\mu_i$, evolve the latent dynamics: $h^{\mu_i,t_{n+k}}=(\tilde{A}_{\mu_i})^k h^{\mu_i,t_n}.$

        \item Interpolate across the parameter space to obtain $h^{\mu^*,t_{n+k}}=\mathcal{I}_k(\mu^*)$
        from the latent states $\{h^{\mu_i,t_{n+k}}\}_{i=1}^{n_p}$.
        \item Decode the interpolated latent state: $\mathrm{stack}\!\left(\widehat{\chi}^{\mu^*,t_{n+k}},\,\widehat{u}^{\mu^*,t_{n+k}}\right)=\mathcal{G}_d\!\left(h^{\mu^*,t_{n+k}}\right).$
        \item Reconstruct the Eulerian field: $\widehat{u}_{E}^{\mu^*,t_{n+k}}=\mathrm{Interp}\!\left(\widehat{\chi}^{\mu^*,t_{n+k}},\,\widehat{u}^{\mu^*,t_{n+k}}\right).$
    \end{enumerate}
\end{algorithm}

\begin{figure}[htbp]
    \centering
    \begin{subfigure}[b]{0.4\textwidth}
        \includegraphics[width=\textwidth]{fig/1d_adv_cae8.png}
        \label{subfig:1d_adv_eul_cae_2}
        \caption{Eulerian ROM}
    \end{subfigure}\hfill
    \begin{subfigure}[b]{0.4\textwidth}
        \includegraphics[width=\textwidth]{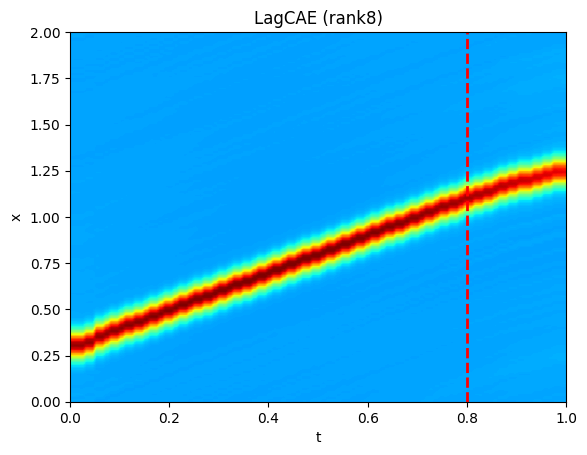}
        \label{subfig:1d_adv_lag_cae}
        \caption[Lagrangian ROM]{Lagrangian ROM}
    \end{subfigure}
    \caption{Prediction results of nonlinear ROMs. The red dashed line marks $t=T_{\text{train}}=0.8$, separating reconstruction (left) from prediction (right). The Lagrangian ROM preserves the transported structure during prediction, while the Eulerian ROM exhibits spurious artifacts after $t=0.8$.}
    \label{fig:1d_adv_lag}
\end{figure}

\begin{figure}[htbp]
    \centering
    \begin{subfigure}[b]{0.33\textwidth}
        \includegraphics[width=\textwidth]{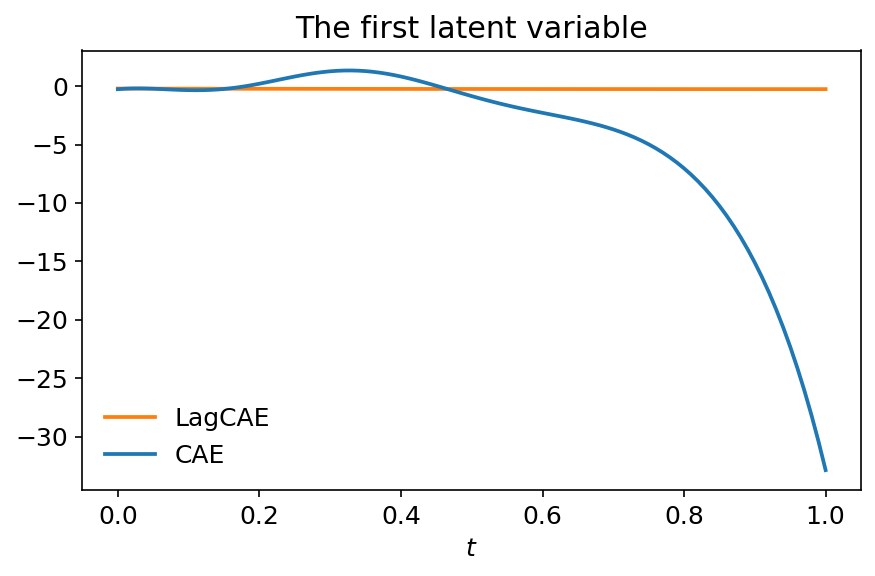}
        \label{subfig:1d_adv_latent1}
    \end{subfigure}\hfill
    \begin{subfigure}[b]{0.33\textwidth}
    \includegraphics[width=\textwidth]{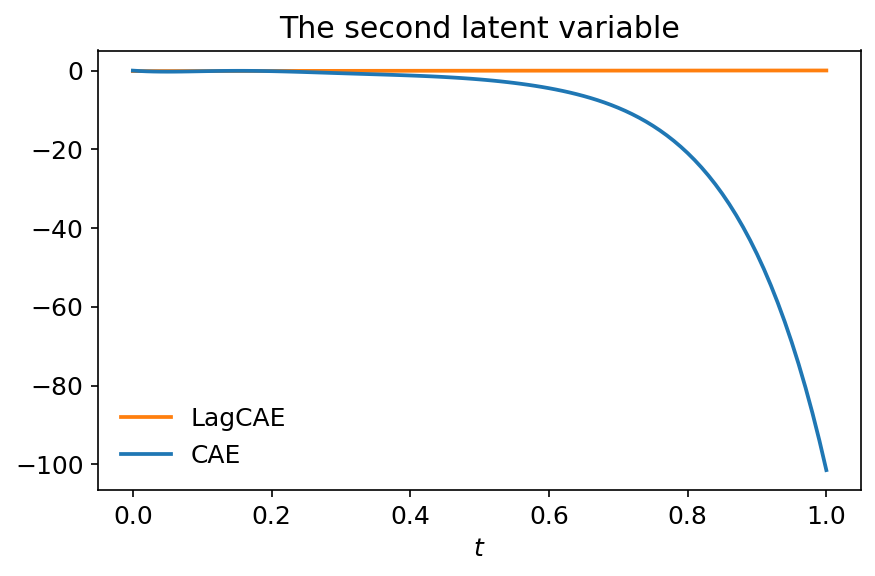}
        \label{subfig:1d_adv_latent2}
    \end{subfigure}
    \begin{subfigure}[b]{0.33\textwidth}
    \includegraphics[width=\textwidth]{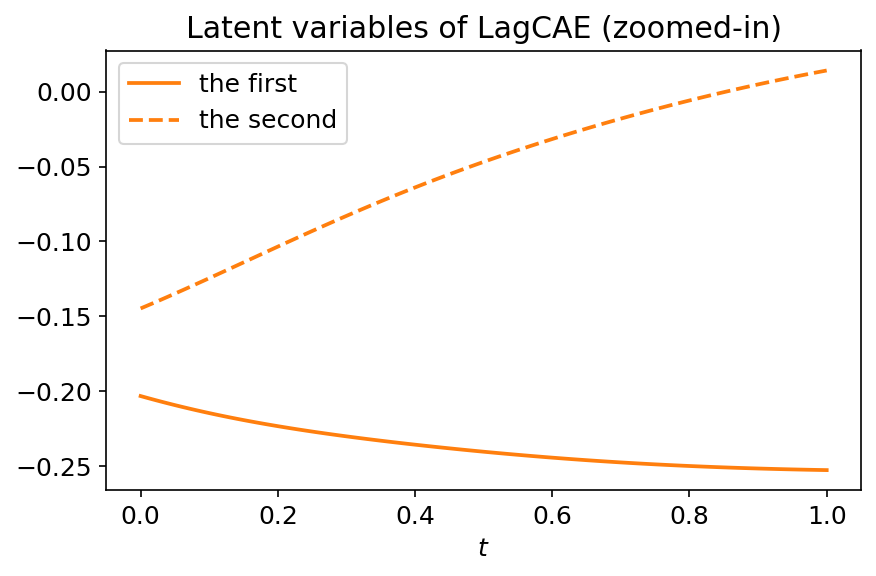}
        \label{subfig:1d_adv_lag_latent}
    \end{subfigure}
    \caption{Latent trajectories during prediction for the 1D advection test. The Lagrangian latent trajectories remain smooth and bounded, while the Eulerian latent trajectories exhibit drift during prediction.}
    \label{fig:1d_adv_latent}
\end{figure}
\subsection{Lagrangian parametric DMD}\label{subsec:lag_pod_rom}
The online stage of the proposed non-intrusive Lagrangian autoencoder-based ROM naturally expanded as a parametric version of Lagrangian DMD by replacing the autoencoder with linear compression and lifting operators. We call the resulting method Lagrangian parametric DMD (Lag--pDMD).

This method can be seen as the linear counterpart of the proposed Lagrangian autoencoder, the Lagrangian counterpart of parametric DMD in \cite{Andreuzzi2023-pdmd-partition-stack}, and the parametric extension of the Lagrangian DMD in \cite{Lu2020-lagdmd}.  Its crucial components are as follows.

\subheading{Data treatment.} For each $(\mu_i,t_k)$, let
\[
m^{\mu_i,t_k} = \mathrm{stack}\bigl(\chi^{\mu_i,t_k},u^{\mu_i,t_k}\bigr)
=\begin{bmatrix}
        | \\
        \chi^{\mu_i,t_k}\\
        | \\
        u^{\mu_i,t_k} \\
        | \\
    \end{bmatrix}
\in\mathbb{R}^{2n_s},
\quad k=0,\dots,n_t,
\]
where the operator $\mathrm{stack}(\cdot,\cdot)$ concatenates the coordinate and solution vectors (both indexed by the reference coordinate $\{\hat{x}_j\}_{j=1}^{n_s}$) into a column vector of length $2n_s$. For each parameter $\mu_i$, the corresponding snapshots are collected:
\[
M^{\mu_i} = [m^{\mu_i,t_0},\dots,m^{\mu_i,t_{n}}]\in\mathbb{R}^{2n_s\times (n_t+1)}.
\]
All snapshots are then assembled into the global Lagrangian snapshot matrix
\[
\mathbf{M}
=
\bigl[ M^{\mu_1}, \dots, M^{\mu_{n_p}} \bigr]
\in \mathbb{R}^{2n_s \times n_p(n_t+1)}
\]
where $n_p(n_t+1)$ is the total number of snapshots.

\subheading{Offline training.} We compute a truncated singular value decomposition
\[
\mathbf{M} \approx \Phi \Sigma V^\top,
\]
and retain the first $r$ left singular vectors $\Phi_r\in\mathbb{R}^{2n_s\times r}$ as a global orthonormal basis for the augmented Lagrangian snapshots. For each training parameter $\mu_i$, we project the corresponding snapshots onto this basis:
\[
H^{\mu_i} = \Phi_r^\top M^{\mu_i} 
= \bigl[ h^{\mu_i,t_0},\dots,h^{\mu_i,t_{n_t}} \bigr] \in \mathbb{R}^{r\times (n_t+1)}.
\]
A reduced DMD operator $\tilde{A}_{\mu_i}\in\mathbb{R}^{r\times r}$ is constructed for each $\mu_i$ using the standard DMD procedure in Section~\ref{subsubsec:dmd}, applied to the shifted data pair $(H^{\mu_i,-},H^{\mu_i,+})$. 

\subheading{Online prediction}
The online stage of Lag--pDMD follows the same latent-state-interpolation pDMD strategy as in Section~\ref{subsubsec:pdmd_online} and mirrors Algorithm~\ref{alg:lagcae_online}. The only difference is that the nonlinear decoder $\mathcal{G}_d$ is replaced by a linear map based on the global basis $\Phi_r$ learned offline.



\section{Experiments}\label{sec:experiment}
This section reports three numerical experiments for representative transport-dominated parametrized PDEs. Unless otherwise stated, all reported errors are measured in the Eulerian frame on a fixed grid, so that different ROMs in the Eulerian and Lagrangian frames can be compared in the same output representation. In all experiments, we assess extrapolative prediction performance. Specifically, the ROMs are evaluated on parameter values not observed during training, and the learned reduced dynamics are used to predict the solution over a test time interval beyond the training time window. The relative $L^2$ error is defined by
\begin{equation}
\mathrm{err} = \frac{1}{n_{t^*}n_{\mu^*}}\sum_{t^*,\mu^*}\frac{\|u_E(t^*,\mu^*)-\hat{u}_E(t^*,\mu^*)\|_2}{\|u_E(t^*,\mu^*)\|_2}
\end{equation}
where $n_{t^*}$ and $n_{\mu^*}$ denote the numbers of test time instances and test parameters, respectively. Here, $u_E$ denotes the ground-truth solution in the Eulerian frame, and $\hat{u}_E$ denotes the interpolated Eulerian solution obtained from the Lagrangian prediction via $\hat{u}_E=\text{Interp}(\hat{u},\hat{x})$. The interpolation from the Lagrangian frame to the Eulerian frame is performed using radial basis function interpolation in 1D and linear interpolation in 2D.

We consider both linear and nonlinear ROMs: (i) Eulerian pDMD and Eulerian CAE--pDMD, and (ii) Lag--pDMD and LagCAE--pDMD. As the primary objective is to evaluate reduction efficiency and long-time prediction accuracy, rather than to perform extensive hyperparameter tuning, no separate validation set is introduced. All hyperparameters are prescribed a priori and kept fixed throughout, with no selection or adjustment based on the test data. 

For a fair comparison, the network architecture and training setup are kept identical for the Eulerian and Lagrangian CAEs, except for the input/output channels associated with the two frames. The neural network architecture and training hyperparameters are detailed in Appendix~\ref{app:arch-cae}.

\subsection{1D viscous Burgers equation}\label{subsec:1d_burgers}
The first benchmark problem is the one-dimensional viscous Burgers equation, which serves as a representative nonlinear transport-dominated problem featuring wave steepening and moving fronts:
\[
\frac{\partial u}{\partial t} + u\frac{\partial u}{\partial x} = \frac{1}{Re}\frac{\partial^2 u}{\partial x^2}, 
\qquad u(x,0) = u_0(x), 
\qquad x \in [0,1.5], \ t \in [0,4].
\]
The exact solution is available \cite{Maulik2021-cae-lstm}:
\[
u(x,t;Re) = \frac{\frac{x}{t+1}}{1 + \sqrt{\frac{t+1}{t_0}} \exp\!\left(Re\frac{x^2}{4t+4}\right)}, 
\qquad t_0 = \exp\!\left(\frac{Re}{8}\right),
\]
where $Re$ denotes the Reynolds number.

The spatial domain $[0,1.5]$ is discretized by a uniform grid with $128$ points. For training, we sample $21$ Reynolds numbers uniformly from $[200,600]$ and collect $81$ solution snapshots uniformly in time over $t\in[0,3.2]$. The test set contains four unseen Reynolds numbers $\{277,315,413,572\}$, each evaluated at $20$ time instances in $t\in(3.2,4.0]$. 

Figure~\ref{fig:1d_burgers_solution} compares predicted solution profiles at $t=4.0$ for $Re=315$ and $Re=572$. The Eulerian baselines (pDMD and CAE--pDMD) exhibit noticeable nonphysical oscillations near the peak and the steep front. In contrast, the Lagrangian variants (Lag--pDMD and LagCAE--pDMD) more closely match the front location and transition width of the ground truth, while largely suppressing spurious oscillations.

\begin{figure}[htbp]
    \centering
    \begin{subfigure}[b]{0.48\textwidth}
        \includegraphics[width=\textwidth]{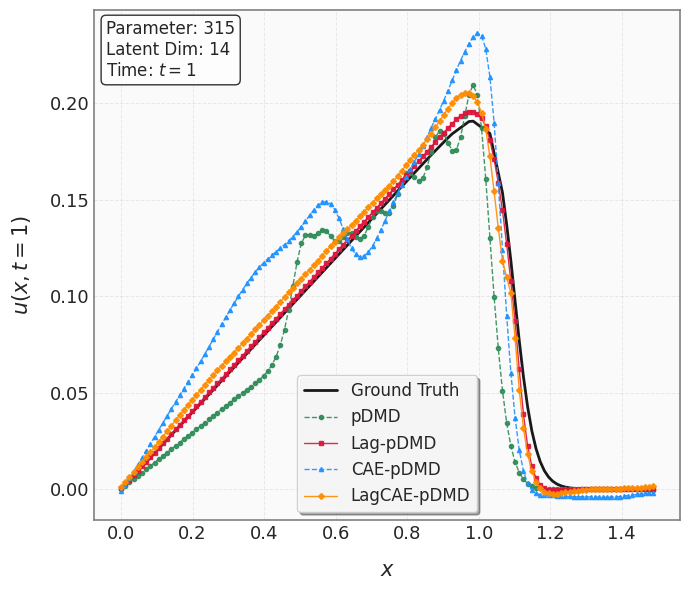}
        \caption{$Re=315$}
    \end{subfigure}
    \hfill
    \begin{subfigure}[b]{0.48\textwidth}
        \includegraphics[width=\textwidth]{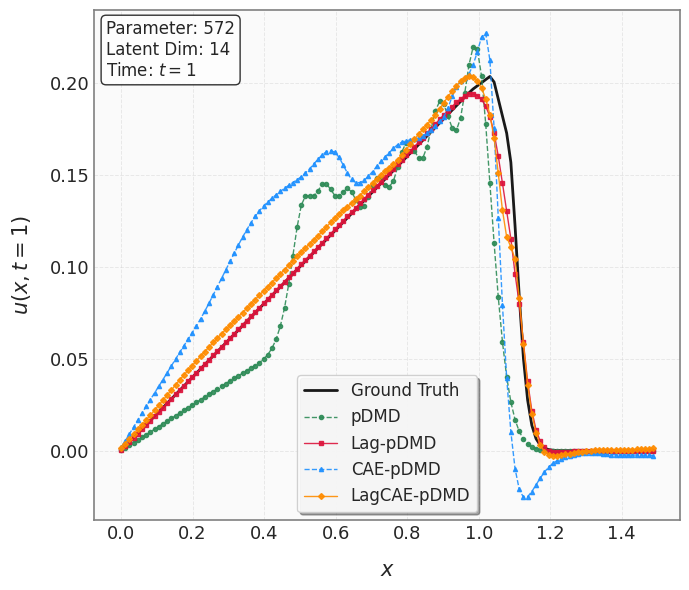}
        \caption{$Re=572$}
    \end{subfigure}
    \caption{Solution profiles of the 1D viscous Burgers equation at $t=4.0$. Results are shown for two unseen Reynolds numbers in the test set.}
    \label{fig:1d_burgers_solution}
\end{figure}

Figure~\ref{fig:1d_burgers_err_summary} summarizes the test errors of four ROMs (pDMD, CAE--pDMD, Lag--pDMD, and LagCAE--pDMD) on the 1D viscous Burgers benchmark, measured by the average relative $L^2$ error.

\begin{figure}[htbp]
    \centering
    \begin{subfigure}[ht]{0.32\textwidth}
        \includegraphics[width=\linewidth]{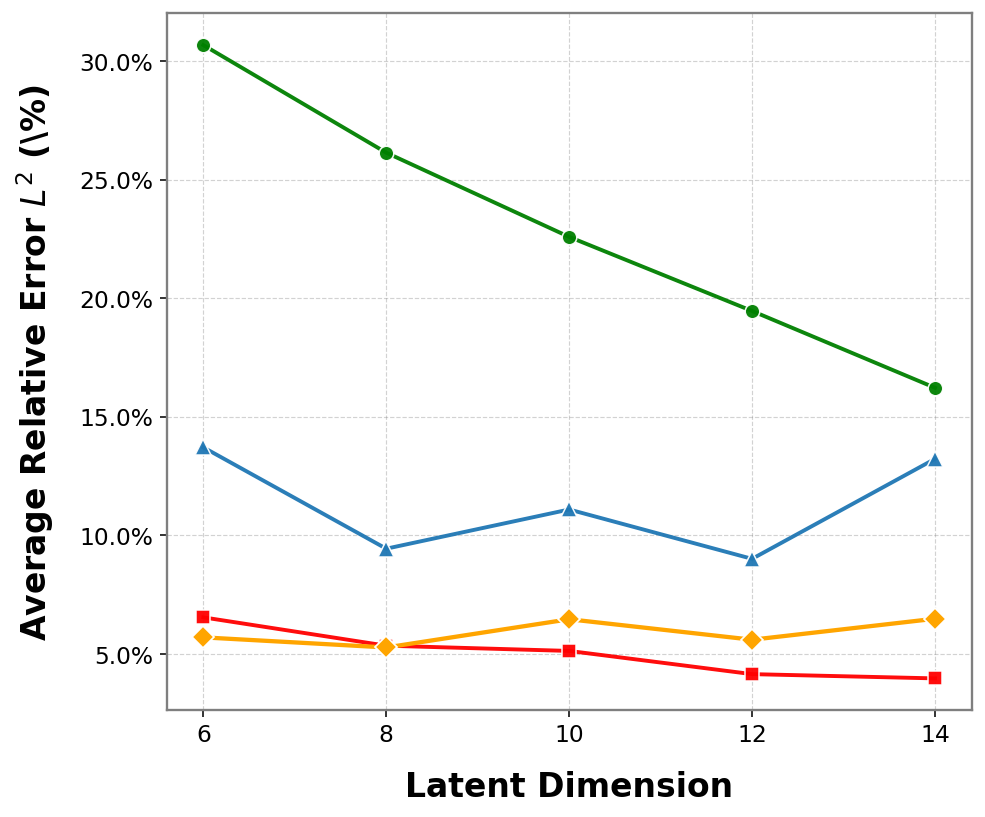}
        \caption{}\label{subfig:1d_burgers_err_ld}
    \end{subfigure}
    \hfill
    \begin{subfigure}[ht]{0.32\textwidth}
        \includegraphics[width=\linewidth]{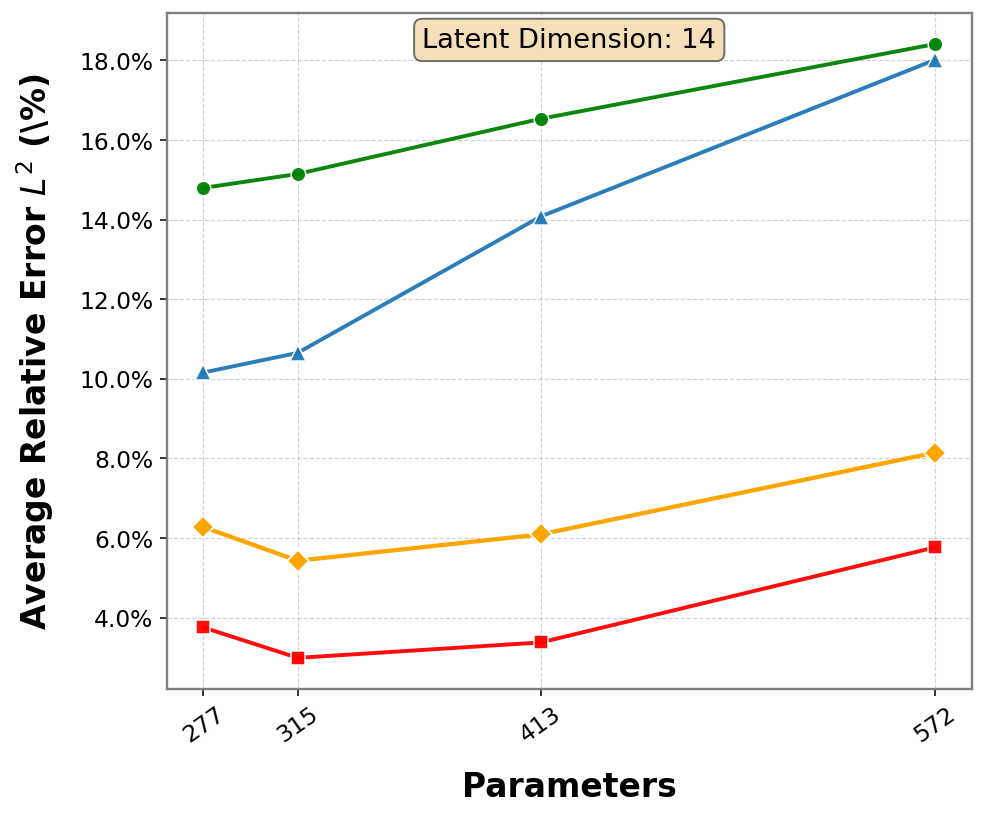}
        \caption{}\label{subfig:1d_burgers_err_param}
    \end{subfigure}
    \hfill
    \begin{subfigure}[ht]{0.32\textwidth}
        \includegraphics[width=\linewidth]{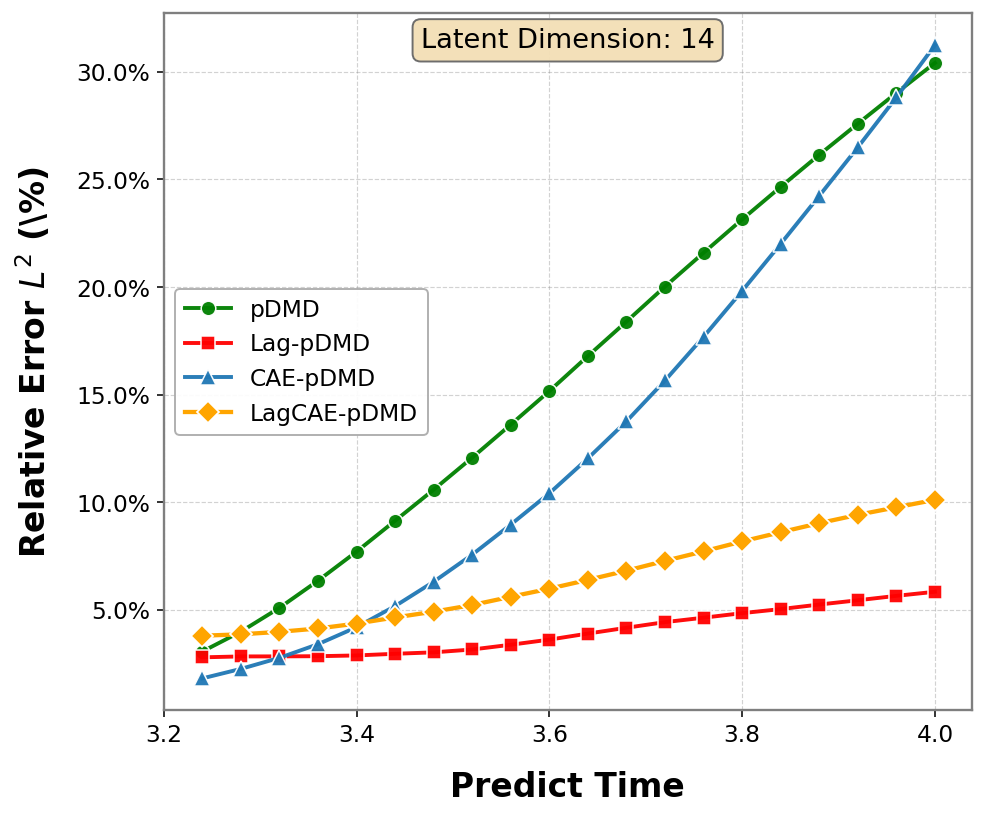}
        \caption{}\label{subfig:1d_burgers_err_time}
    \end{subfigure}
    \caption{1D viscous Burgers equation. Average relative $L^2$ error (\%), i.e., for unseen parameter values and future time instances:
    (a) error versus latent dimension $r$; 
    (b) error versus Reynolds number for a fixed latent dimension $r=14$;
    (c) error versus prediction step for $r=14$ during a 20-step prediction over the test time window.}
    \label{fig:1d_burgers_err_summary}
\end{figure}

\subheading{Effect of latent dimension.}
Figure~\ref{fig:1d_burgers_err_summary}\subref{subfig:1d_burgers_err_ld} reports the average test error (i.e., for unseen parameter values and future time instances) versus the latent dimension $r\in\{6,8,10,12,14\}$ (rank-$r$ truncation for DMD-based models and latent size $r$ for CAE-based models). Both Lagrangian approaches yield consistently lower errors across all $r$. The Eulerian baselines benefit from increasing $r$, but remain significantly less accurate, indicating that neither enlarging the linear subspace (pDMD) nor introducing a nonlinear encoder (CAE--pDMD) is sufficient to eliminate dominant errors in this setting.

\subheading{Error distribution of unseen Reynolds numbers.}
To isolate parameter effects, we fix $r=14$ and evaluate the methods at the four unseen Reynolds numbers $\{277,315,413,572\}$. As shown in Figure~\ref{fig:1d_burgers_err_summary}\subref{subfig:1d_burgers_err_param}, the prediction error increases with $Re$ for all methods, consistent with sharper fronts and reduced diffusion at higher $Re$. Nevertheless, Lag--pDMD and LagCAE--pDMD maintain substantially smaller errors than their Eulerian counterparts over the entire test parameter range, indicating weaker sensitivity to the Reynolds number and better robustness as the front becomes sharper.

\subheading{Prediction stability.}
Figure~\ref{fig:1d_burgers_err_summary}\subref{subfig:1d_burgers_err_time} shows the prediction error averaged over the test Reynolds numbers in the test window with fixed rank $r=14$. The Eulerian models exhibit rapid error accumulation as prediction proceeds, whereas the Lagrangian models show significantly slower error growth. This demonstrates the superior prediction stability of Lagrangian ROMs.

\subsection{2D advection--diffusion equation}
We next consider a two-dimensional parametric advection--diffusion equation on $(x,y,t)\in[0,4]\times[0,4]\times[0,1]$:

\begin{subequations}\label{eq:2d_advdiff}
\begin{align}
&\frac{\partial u}{\partial t}
+\cos(\theta)\frac{\partial u}{\partial x}
+\sin(\theta)\frac{\partial u}{\partial y}
=
D\left(
\frac{\partial^2 u}{\partial x^2}
+\frac{\partial^2 u}{\partial y^2}
\right),
\label{eq:2d_advdiff_a}\\
&u(x,y,0)
=
\exp\!\left(
-\frac{(x-2)^2+(y-2)^2}{0.1}
\right),
\label{eq:2d_advdiff_b}\\
&u(0,y,t)=u(4,y,t), \qquad
u(x,0,t)=u(x,4,t).
\label{eq:2d_advdiff_c}
\end{align}
\end{subequations}
where $\theta \in [0,2\pi]$ controls the advection direction and $D=0.001$ is the fixed diffusion coefficient.

Reference solutions are computed using a second-order central difference discretization for the Laplacian, an upwind scheme for the advection term, and forward Euler time integration. The spatial domain is discretized on a uniform $40\times 40$ grid with $\Delta x=\Delta y=0.1$ ($M=1600$ degrees of freedom). The time interval $[0,1]$ is discretized with time step $\Delta t=0.01$, yielding 101 time instances, which satisfies the CFL stability requirement for this setting. The training set consists of $30$ uniformly sampled parameters $\theta\in[0,2\pi]$, each associated with $81$ snapshots from $t\in[0,0.8]$. The test set contains $6$ unseen parameters $\mathcal{D}_{\text{test}}=\left\{\frac{k}{7}\cdot 2\pi\right\}_{k=1}^6$, each paired with $20$ snapshots from $t\in(0.8,1.0]$, thereby evaluating temporal extrapolation beyond the training time.

Figure~\ref{fig:2d_advdiff_solution} shows representative predictions for the parameter $\theta={10\pi}/7$ with a fixed latent dimension $r = 6$. The Eulerian baselines (pDMD and CAE--pDMD) exhibit noticeable artifacts, including spurious oscillations and peak splitting, leading to an incorrect spatial structure compared with the reference solution. In contrast, the Lagrangian variants (Lag--pDMD and LagCAE--pDMD) preserve a single coherent peak at the correct location and produce smoother fields, resulting in closer agreement with the reference.

\begin{figure}[htbp]
    \centering
    \begin{subfigure}[c]{0.3\textwidth}
        \centering
        \includegraphics[width=\textwidth]{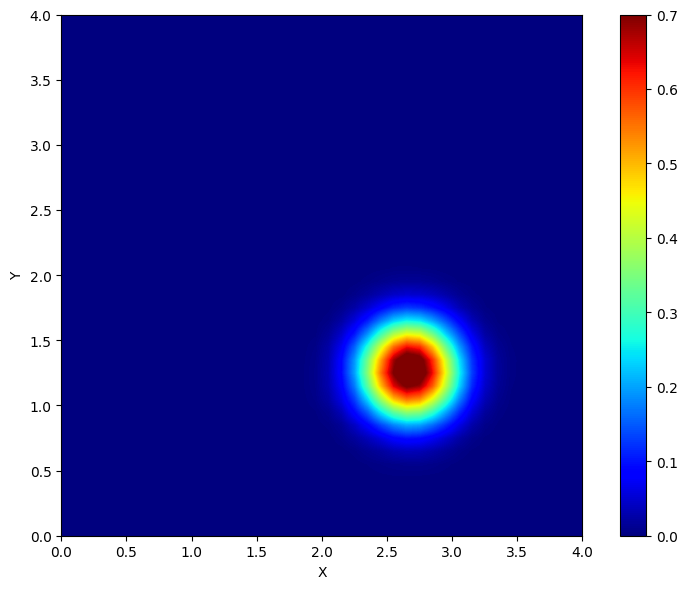}
        \caption{Ground truth}\label{subfig:2d_advdiff_gt}
    \end{subfigure}
    \hfill
    \begin{subfigure}[c]{0.3\textwidth}
        \centering
        \includegraphics[width=\textwidth]{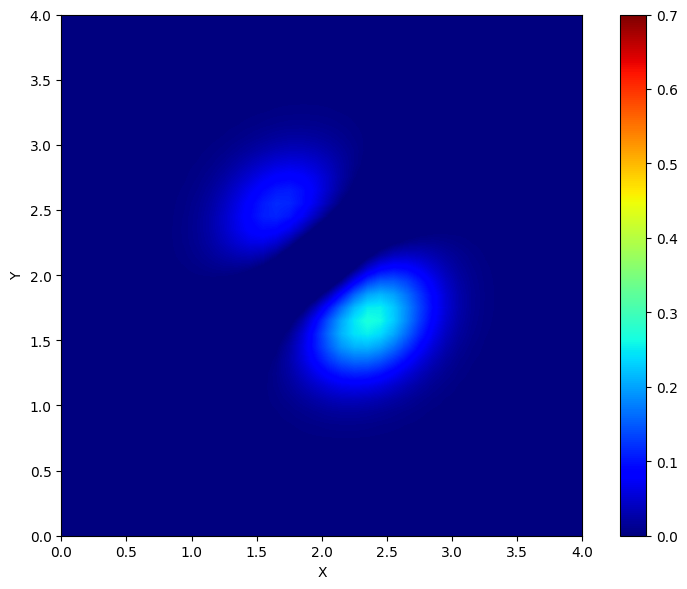}
        \caption{Eulerian DMD}\label{subfig:2d_advdiff_euler_dmd}
        \vspace{0.5cm}
        \includegraphics[width=\textwidth]{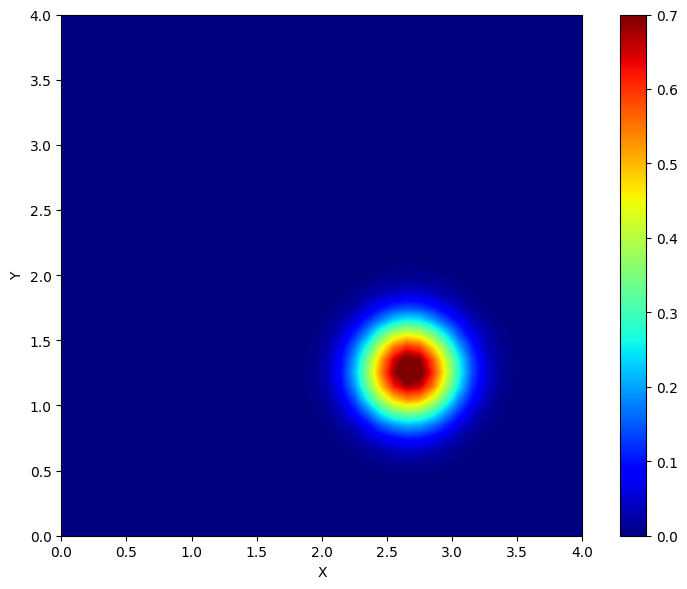}
        \caption{Lagrangian DMD}\label{subfig:2d_advdiff_lag_dmd}
    \end{subfigure}
    \hfill
    \begin{subfigure}[c]{0.3\textwidth}
        \centering
        \includegraphics[width=\textwidth]{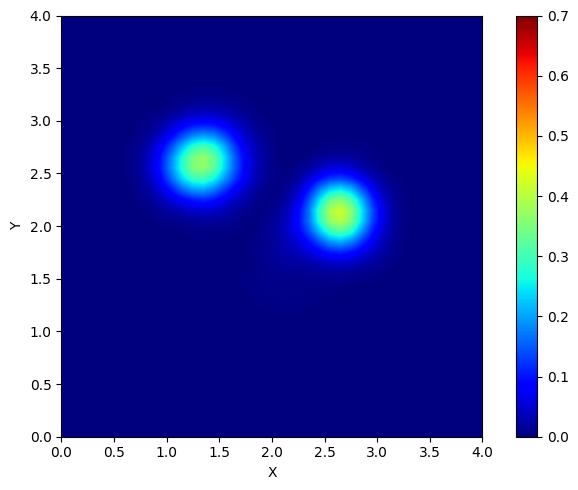}
        \caption{Eulerian CAE}\label{subfig:2d_advdiff_euler_cae}
        \vspace{0.5cm}
        \includegraphics[width=\textwidth]{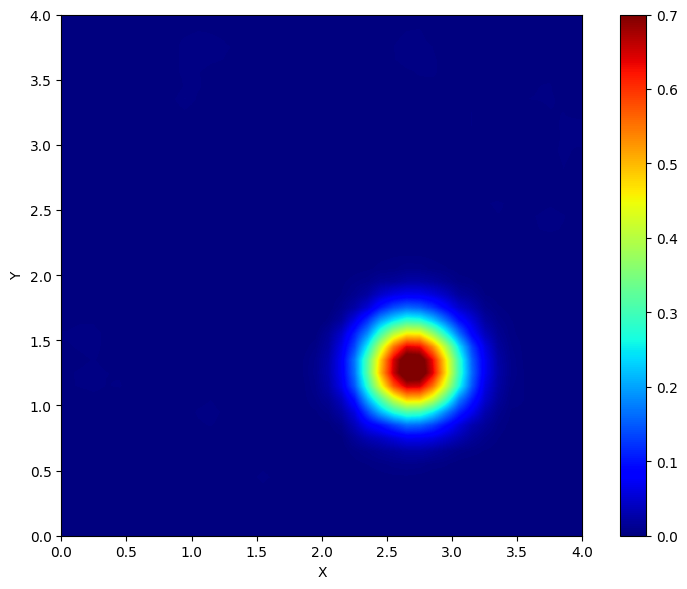}
        \caption{Lagrangian CAE}\label{subfig:2d_advdiff_lag_cae}
    \end{subfigure}
    \caption{Solutions of the 2D advection--diffusion equation for parameter $\theta={10\pi}/{7}$ at $t=1.0$.}
    \label{fig:2d_advdiff_solution}
\end{figure}

Figure~\ref{fig:2d_advdiff_err_summary} summarizes the test errors of four ROMs (pDMD, CAE--pDMD, Lag--pDMD, and LagCAE--pDMD) in the 2D advection--diffusion problem, measured by the average relative $L^2$ error.

\begin{figure}[ht]
    \begin{subfigure}[ht]{0.32\textwidth}
        \includegraphics[width=\linewidth]{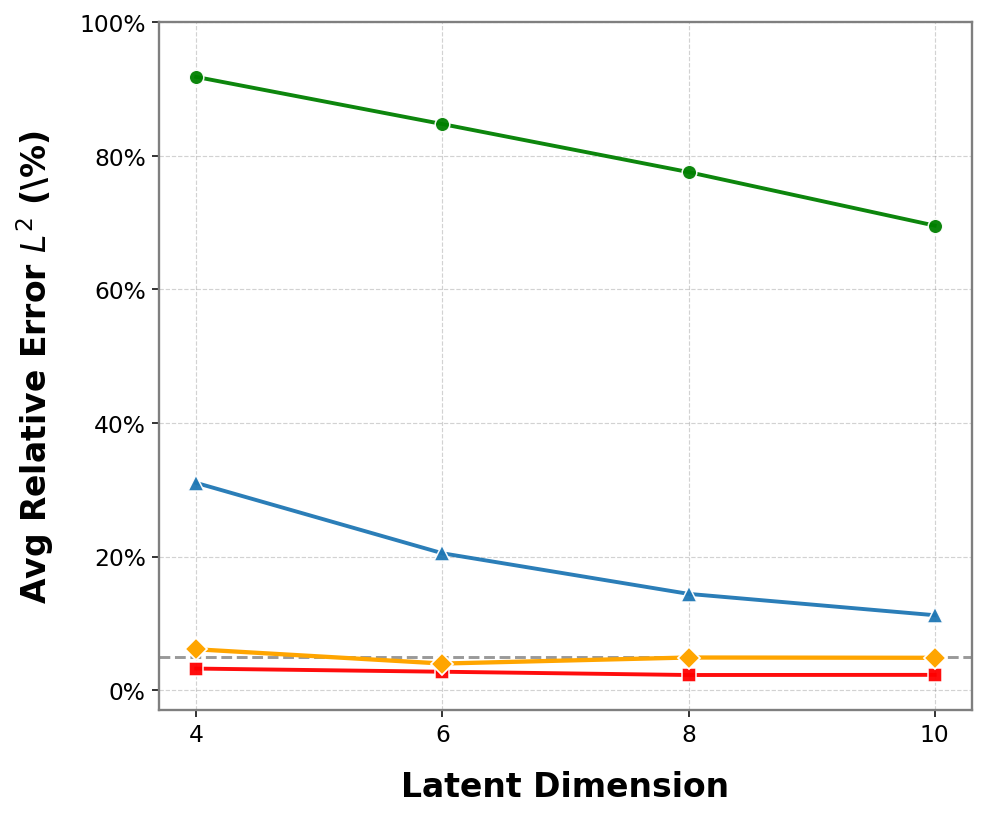}
    \caption{}\label{subfig:2d_advdiff_err_ld}
    \end{subfigure}
    \hfill
    \begin{subfigure}[ht]{0.32\textwidth}
        \includegraphics[width=\linewidth]{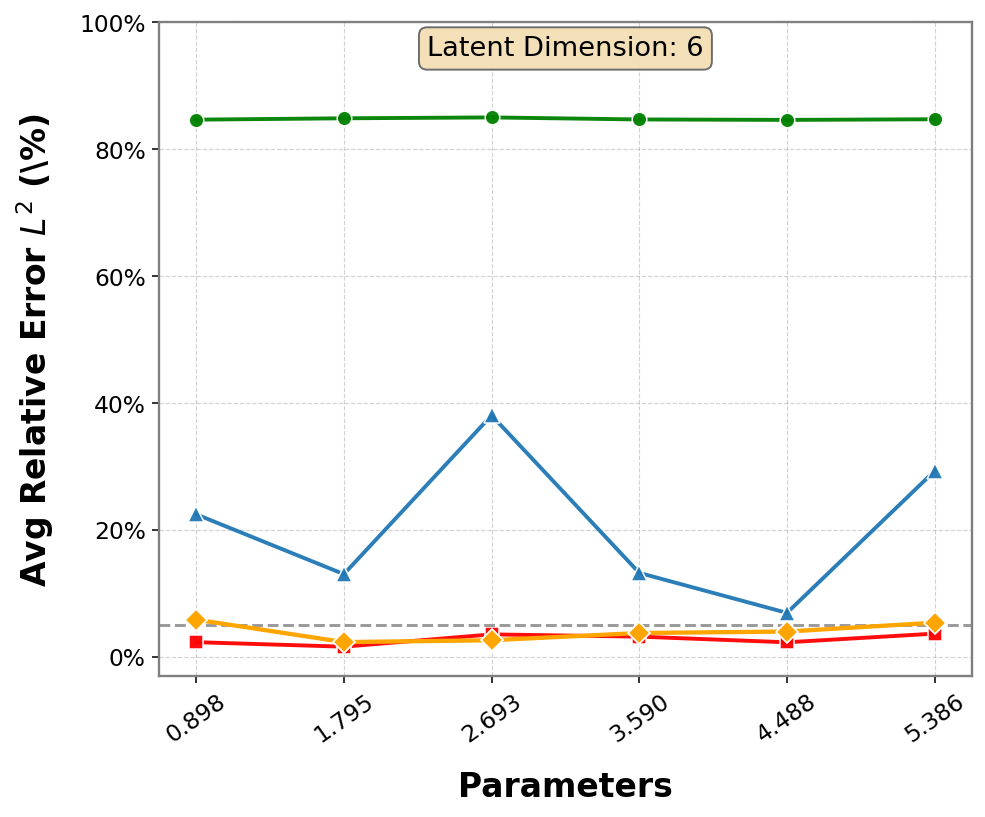}
    \caption{}\label{subfig:2d_advdiff_err_param}
    \end{subfigure}
    \hfill
     \begin{subfigure}[ht]{0.32\textwidth}
        \includegraphics[width=\linewidth]{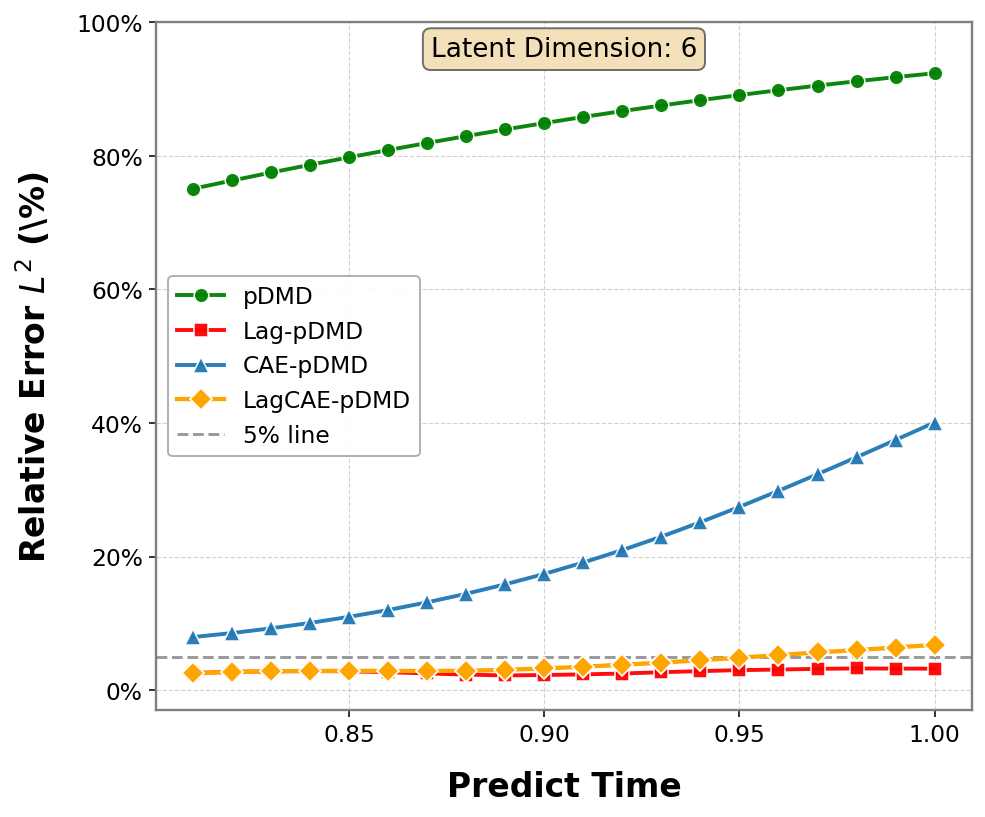}
        \caption{}\label{subfig:2d_advdiff_err_time}
    \end{subfigure}
    \caption{2D advection--diffusion equation. Average relative $L^2$ error (\%), i.e., for unseen parameter values and future time instances:
    (a) error versus latent dimension $r$; 
    (b) error versus the advection direction parameter $\theta$ for a fixed latent dimension $r=6$;
    (c) error versus prediction step for $r=6$ during a 20-step prediction over the test time window.}\label{fig:2d_advdiff_err_summary}
\end{figure}

\subheading{Effect of latent dimension.}
We evaluate all methods with latent dimensions $r\in\{4,6,8,10\}$. Figure~\ref{fig:2d_advdiff_err_summary}\subref{subfig:2d_advdiff_err_ld} reports the average relative $L^2$ error on the test set (i.e., for unseen parameter values and future time instances). Both Lagrangian methods achieve consistently low errors (below $5\%$) across all latent dimensions and exhibit comparable accuracy to each other. In contrast, the Eulerian baselines are substantially less accurate: pDMD remains above $80\%$, while CAE--pDMD reduces the error but still stays around $15\%$. This gap is consistent with the fact that the Eulerian solution is dominated by a translating profile; representing multiple shifted versions of the same structure with a fixed low-dimensional basis is inefficient and typically leads to prediction errors and spurious oscillations at small $r$. By explicitly tracking characteristic trajectories, the Lagrangian ROMs achieve significantly improved accuracy.

\subheading{Error distribution of unseen parameters.}
For a specific latent dimension ($r = 6$), Figure ~\ref{fig:2d_advdiff_err_summary}\subref{subfig:2d_advdiff_err_param} illustrates the parameter-dependent error distribution. All methods exhibit broadly consistent behavior across the test parameters, with no pronounced bias toward any specific parameter, except for CAE--pDMD. 

\subheading{Prediction stability.}
Figure~\ref{fig:2d_advdiff_err_summary}\subref{subfig:2d_advdiff_err_time} shows the average relative error for $r=6$ over a 20-step prediction in the test time window. The Eulerian methods accumulate errors rapidly as the prediction proceeds, whereas the Lagrangian methods maintain low error levels throughout the prediction horizon. This indicates that the Lagrangian ROMs can more reliably and stably predict future solutions.

\subsection{2D Burgers equation}\label{subsec:2d_burgers}
Finally, we consider a parametrized two-dimensional viscous Burgers equation, which serves as a more challenging nonlinear benchmark with coupled transport effects in higher dimensions. Reference solutions are obtained by numerically solving the following system with viscosity $\nu = 0.01$ over the parameter domain $\mu \in \mathcal{D} = [0.4, 0.8]$. 

\[
\left\{\begin{aligned}
& \frac{\partial u}{\partial t}+u\frac{\partial u}{\partial x}+v\frac{\partial u}{\partial y}=\nu(\frac{\partial^2 u}{\partial x^2}+\frac{\partial^2 u}{\partial y^2}) \\
& \frac{\partial v}{\partial t}+u\frac{\partial v}{\partial x}+v\frac{\partial v}{\partial y}=\nu(\frac{\partial^2 v}{\partial x^2}+\frac{\partial^2 v}{\partial y^2}) \\
& u(x,y,0;\mu)=\mu\sin(\pi(x-0.2))\sin(\pi (y-0.2))\chi_{[0.2,1.2]^2}+1, &(x,y)\in[0,5]^2 \\
& v(x,y,0;\mu)=\mu\sin(\pi(x-0.2))\sin(\pi (y-0.2))\chi_{[0.2,1.2]^2}+1, &(x,y)\in[0,5]^2 \\
& u(0,y,t)=u(5,y,t), u(x,0,t)=u(x,5,t), & t\in[0,2] \\
& v(0,y,t)=v(5,y,t), v(x,0,t)=v(x,5,t), & t\in[0,2] \\
\end{aligned}\right.
\]

The spatial domain is discretized using a uniform $128 \times 128$ grid, and the reference solution is computed using the forward Euler scheme with a time step size of $\Delta t = 5 \times 10^{-3}$. Snapshots are uniformly recorded every four time steps, yielding 101 snapshots for each parameter instance.

The training set consists of $17$ parameters uniformly distributed in $\mathcal{D}$, namely $\mathcal{D}_{\text{train}}=\{0.4+0.025\,i\}_{i=0}^{16}$. For each training parameter, we use the first $91$ snapshots to construct the ROM surrogate. The test set contains $8$ parameters randomly sampled from $\mathcal{D}$; for each test parameter, the remaining $10$ snapshots are used to evaluate future prediction accuracy.

Figure~\ref{fig:2d_burgers_solution} compares representative predictions from different methods with a fixed latent dimension of $r=12$. The Eulerian baselines exhibit noticeable spurious oscillations and fail to accurately track the transported structures, whereas the Lagrangian variants better preserve the transport behavior and produce predictions that agree more closely with the reference solution.

\begin{figure}[htbp]
    \centering
    \begin{subfigure}[c]{0.3\textwidth}
        \centering
        \includegraphics[width=\textwidth]{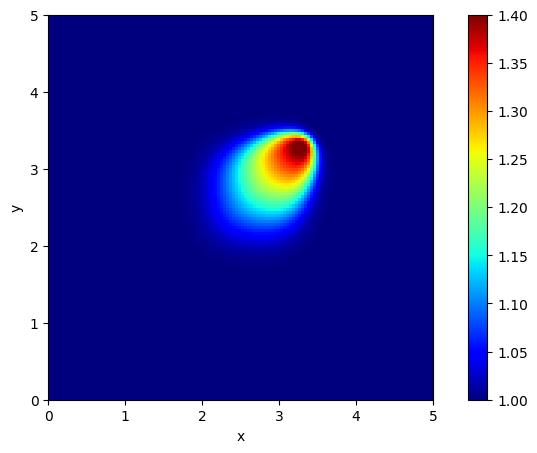}
        \caption{Ground truth}\label{subfig:2d_burgers_gt}
    \end{subfigure}
    \hfill
    \begin{subfigure}[c]{0.3\textwidth}
        \centering
        \includegraphics[width=\textwidth]{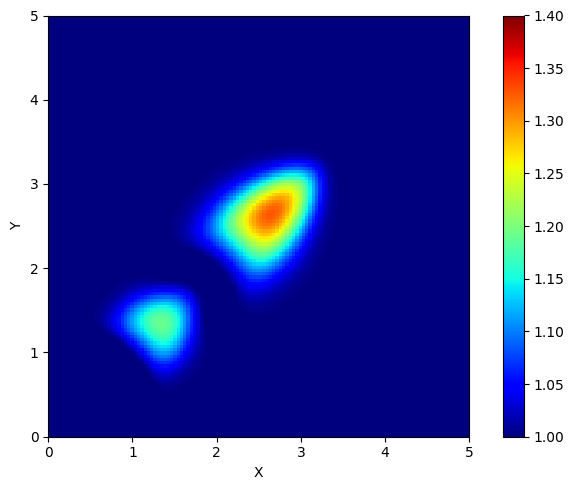}
        \caption{Eulerian DMD}\label{subfig:2d_burgers_euler_dmd}
        \vspace{0.5cm}
        \includegraphics[width=\textwidth]{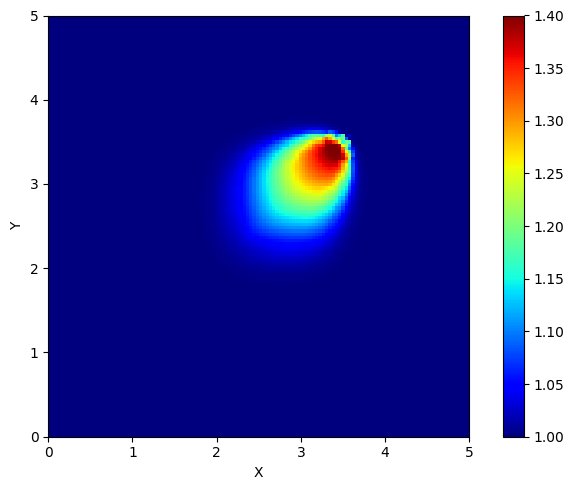}
        \caption{Lagrangian DMD}\label{subfig:2d_burgers_lag_dmd}
    \end{subfigure}
    \hfill
    \begin{subfigure}[c]{0.3\textwidth}
        \centering
        \includegraphics[width=\textwidth]{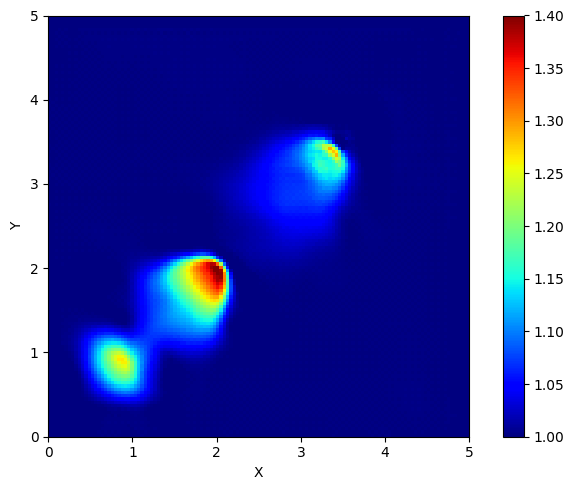}
        \caption{Eulerian CAE}\label{subfig:2d_burgers_euler_cae}
        \vspace{0.5cm}
        \includegraphics[width=\textwidth]{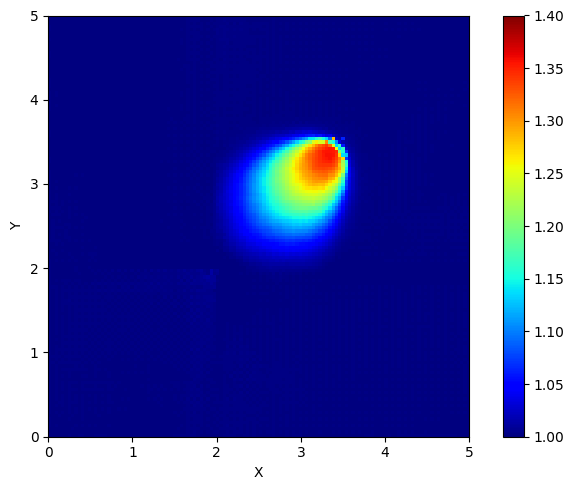}
        \caption{Lagrangian CAE}\label{subfig:2d_burgers_lag_cae}
    \end{subfigure}
    \caption{2D viscous Burgers equation with parameter $\mu=0.7345$ at time $t=2.0$}\label{fig:2d_burgers_solution}
\end{figure}

Figure~\ref{fig:2d_burgers_err_summary} summarizes the test errors of four ROMs (pDMD, CAE--pDMD, Lag--pDMD, and LagCAE--pDMD) in the 2D Burgers problem, measured by the average relative $L^2$ error.

\subheading{Effect of the latent dimension.}
We evaluate the performance across multiple latent space dimensions $\{6, 8, 10, 12, 14, 16, 18, 20\}$. Figure~\ref{fig:2d_burgers_err_summary}\subref{subfig:2d_burgers_err_ld} provides a comparative analysis of prediction errors (i.e., for unseen parameter values and future time instances) among different approaches. Our results indicate that the Lagrangian-based methods consistently outperform their Eulerian counterparts across all tested latent dimensions. In all tested cases, the relative errors of the Lagrangian ROMs remain below $1\%$. Although errors in Eulerian methods decrease with increasing latent dimensions, they still result in relatively high prediction errors.

\subheading{Error distribution of unseen parameters.}
Figure~\ref{fig:2d_burgers_err_summary}\subref{subfig:2d_burgers_err_param} examines the parameter-dependent prediction errors for all methods with a fixed latent dimension of $r=12$. A consistent increase in error with rising parameter values is observed across all methods, attributed to amplified transport phenomena at higher parameter values. The increase is most pronounced for the Eulerian baselines, especially pDMD, while Lag--pDMD remains below $0.5\%$ over the entire test range. LagCAE--pDMD also maintains low errors (below $1\%$) but shows a mild upward trend at larger $\mu$.

\subheading{Prediction stability.}
Figure~\ref{fig:2d_burgers_err_summary}\subref{subfig:2d_burgers_err_time} shows the average relative error in future predictions outside the training time window. The Eulerian methods accumulate error rapidly as the prediction proceeds, with CAE--pDMD showing the fastest growth. In contrast, Lag--pDMD and LagCAE--pDMD exhibit only a mild increase in error in the prediction.

\begin{figure}[ht]
    \centering
    \begin{subfigure}[ht]{0.32\textwidth}
        \includegraphics[width=\linewidth]{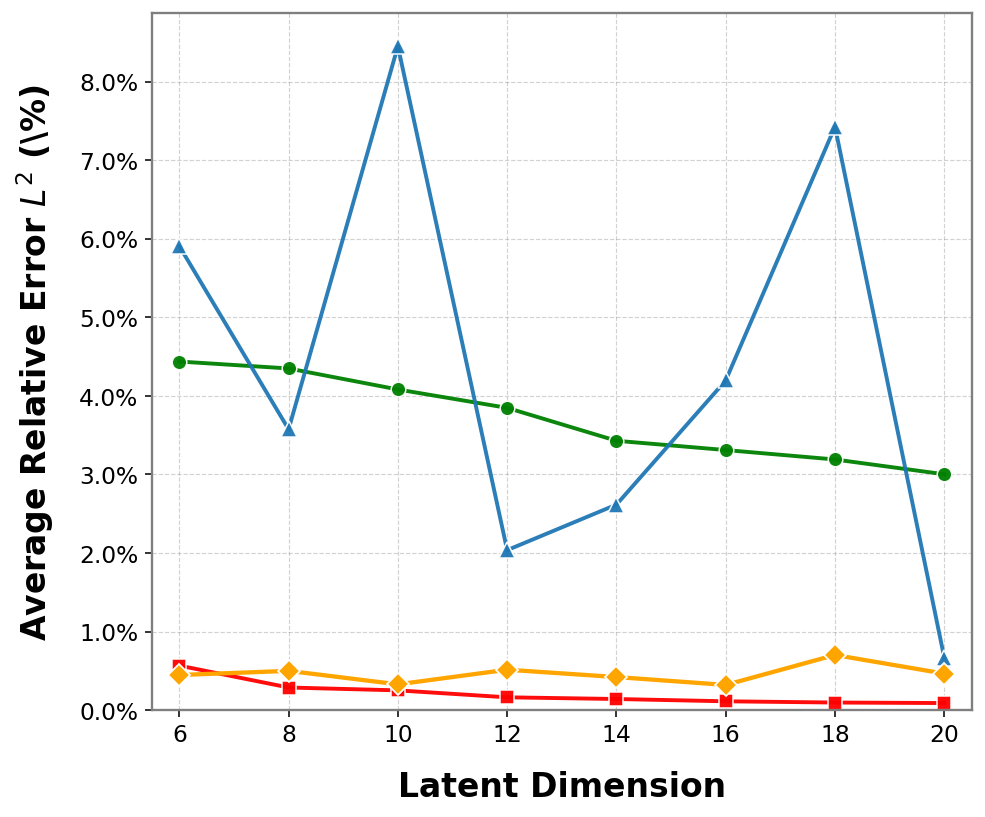}
        \caption{}\label{subfig:2d_burgers_err_ld}
    \end{subfigure}
    \hfill
    \begin{subfigure}[ht]{0.32\textwidth}
        \includegraphics[width=\linewidth]{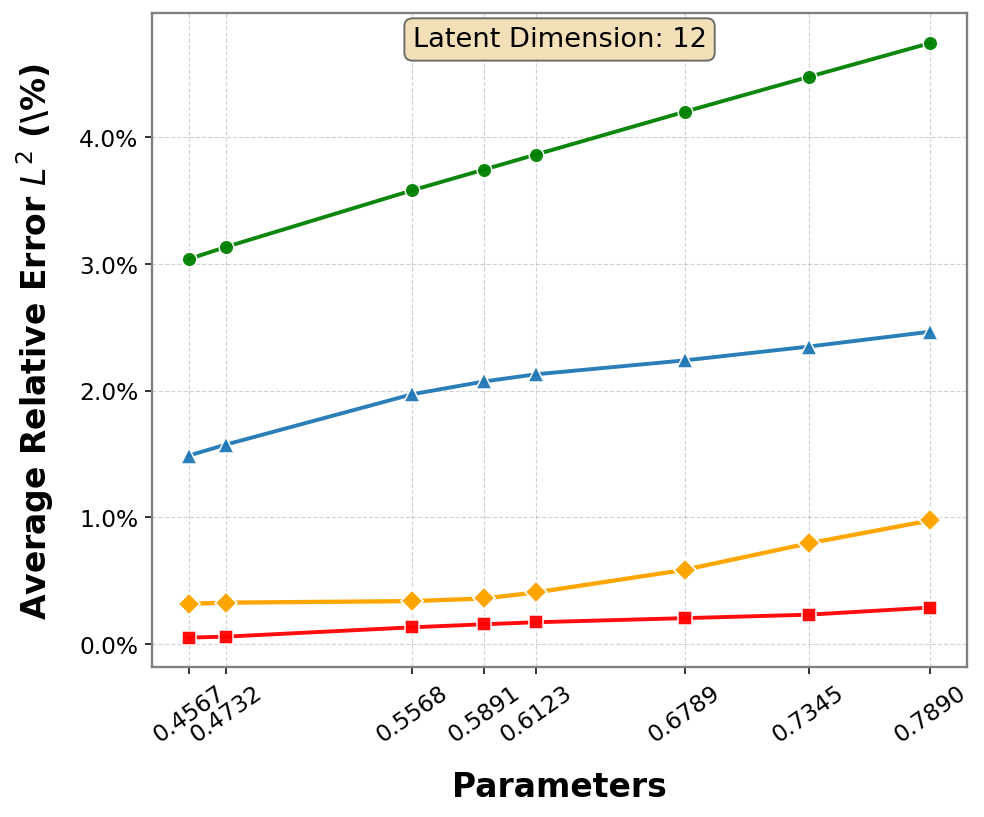}
        \caption{}\label{subfig:2d_burgers_err_param}
    \end{subfigure}
    \hfill
    \begin{subfigure}[ht]{0.32\textwidth}
        \includegraphics[width=\linewidth]{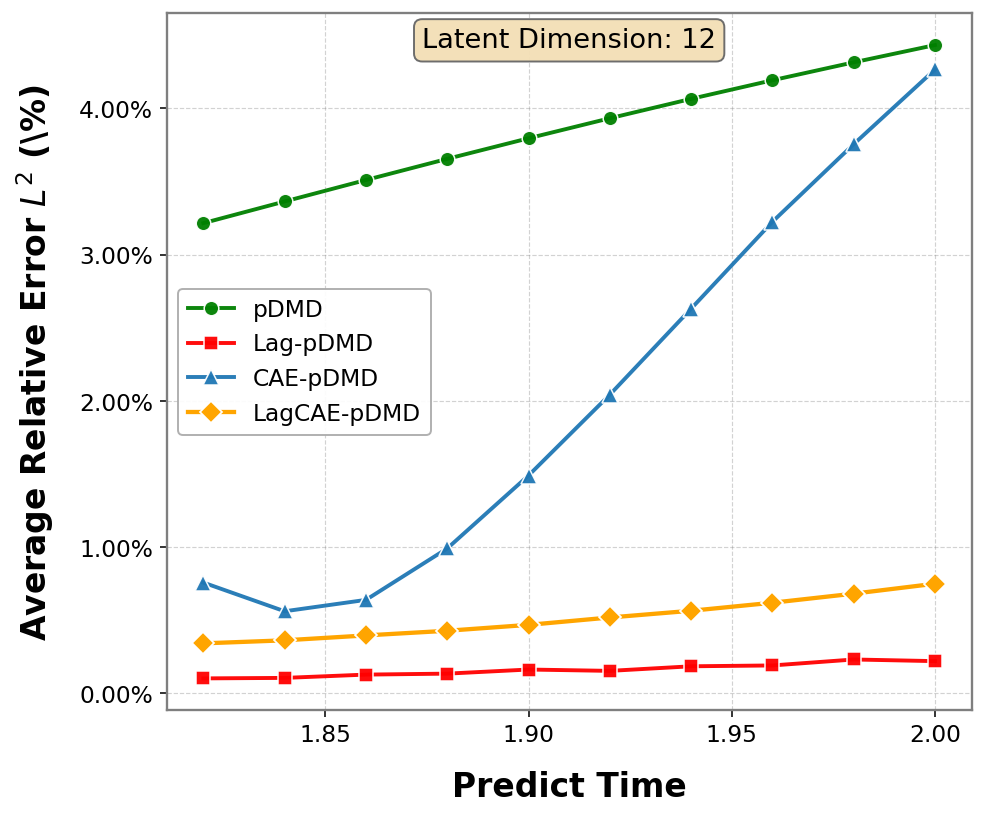}
        \caption{}\label{subfig:2d_burgers_err_time}
    \end{subfigure}
    \caption{2D viscous Burgers equation. Average relative $L^2$ error (\%), i.e., for unseen parameter values and future time instances:
    (a) error versus latent dimension $r$; 
    (b) error versus the parameter $\mu$ for a fixed latent dimension $r=12$; 
    (c) error versus prediction step for $r=12$ during a 10-step prediction over the test time window.}
    \label{fig:2d_burgers_err_summary}
\end{figure}


\section{Conclusion}\label{sec:conclusion}
In this paper, we show that even nonlinear autoencoder-based ROMs in the Eulerian frame, despite their strong compression capability, may fail to provide accurate and robust long-time predictions for transport-dominated problems. This limitation arises from the weak correlations induced by spatial shifts in the Eulerian representation.

To address this issue, we develop two Lagrangian reduced-order models: a non-intrusive Lagrangian autoencoder-based ROM and a Lagrangian parametric DMD. In addition, we prove that the Kolmogorov $n$-width of the solution manifold decays faster in the Lagrangian frame. We also show  that solutions exhibit significantly stronger correlations in the Lagrangian frame. These properties provide explanations for the improved approximation and prediction performance of Lagrangian ROMs.

Numerical experiments confirm that the proposed Lagrangian ROMs significantly outperform their Eulerian counterparts in predicting future solutions of parametrized transport-dominated systems, achieving both higher accuracy and improved robustness.

Several directions merit further investigation. First, extending the approach to problems with strong deformations and shocks calls for robust remapping and regularization strategies. Second, richer latent dynamics models beyond linear DMD may further enhance the prediction stability. 


\section*{Acknowledgements}
\textbf{Funding} The work of Z. Peng was partially supported by the Hong Kong Research Grants Council grants Early Career Scheme 26302724 and General Research Fund 16306825. The work of Y. Xiang was partially supported by the National Key Research and Development Program of China (No. 2025YFA1016800) and the Project of Hetao Shenzhen-HKUST Innovation Cooperation Zone HZQB-KCZYB-2020083.

\section*{Data Availability}
Data availability The data that support the findings of this study are available from the corresponding author upon reasonable request.

\section*{Declarations}
\textbf{Conflict of interest} The authors have no relevant financial or non-financial interests to disclose.

\bibliographystyle{spmpsci}      
\bibliography{ref}   

\appendix
\section*{Appendix}
\section{Architectures of the convolutional autoencoders}\label{app:arch-cae}
\begin{table}[ht]
\centering
\small
\caption{Eulerian/Lagrangian CAE architecture for 1D Burgers. The weight decay used in training is $10^{-10}$ and the coefficient of gradient loss $\lambda_{\mathrm{grad}}=0.05$. All convolutional and transpose-convolutional layers use a kernel size of $5$, stride $2$, and zero padding $2$.}
\label{tab:1d_burgers_cae_arch}
\begin{tabular}{lcc}
\toprule
\multicolumn{3}{l}{\textbf{encoder}} \\
\midrule
layer & input shape & output shape \\
\midrule
Conv1d with SiLU & $1\;/\; 2\times128$   & $32\times64$ \\
Conv1d with SiLU & $32\times64$    & $32\times32$ \\
Conv1d with SiLU & $32\times32$    & $32\times16$ \\
Conv1d with SiLU & $32\times16$    & $32\times8$   \\
Conv1d with SiLU & $32\times8$    & $32\times4$   \\
Flatten          & $32\times4$     & $128$               \\
Linear with SiLU & $128$                   & $40$                 \\
Linear with SiLU & $40$                     & $8$                  \\
\midrule
\multicolumn{3}{l}{\textbf{decoder}} \\
\midrule
layer & input shape & output shape \\
\midrule
Linear with SiLU          & $8$                   & $40$                 \\
Linear with SiLU          & $40$                  & $128$               \\
Unflatten                 & $128$                & $32\times4$   \\
ConvTranspose1d with SiLU & $32\times4$   & $32\times8$\\
ConvTranspose1d with SiLU & $32\times8$   & $32\times16$ \\
ConvTranspose1d with SiLU & $32\times16$ & $32\times32$ \\
ConvTranspose1d with SiLU & $32\times32$ & $32\times64$ \\
ConvTranspose1d with SiLU & $32\times64$ & $1\;/\; 2\times128$ \\
\bottomrule
\end{tabular}
\end{table}


\begin{table}[ht]
\centering
\small
\caption{Eulerian/Lagrangian CAE architecture for 2D advection--diffusion equation. The weight decay used in training is $10^{-11}$ and the coefficient of gradient loss $\lambda_{\mathrm{grad}}=0$. All convolutional and transpose-convolutional layers use a kernel size of $4$, stride $2$, and circular padding $1$.}\label{tab:2d_advdiff_cae_arch}
\begin{tabular}{lcc}
\toprule
\multicolumn{3}{l}{\textbf{encoder}} \\
\midrule
layer & input shape & output shape \\
\midrule
Conv2d with SiLU & $1\;/\;3\times40\times40$   & $16\times20\times20$ \\
Conv2d with SiLU & $16\times20\times20$    & $16\times10\times10$ \\
Conv2d with SiLU & $16\times10\times10$    & $16\times5\times5$ \\
Flatten          & $16\times5\times5$      & $400$               \\
Linear with SiLU & $400$                   & $90$                 \\
Linear with SiLU & $90$                     & $8$                  \\
\midrule
\multicolumn{3}{l}{\textbf{decoder}} \\
\midrule
layer & input shape & output shape \\
\midrule
Linear with SiLU        & $8$                   & $90$                 \\
Linear with SiLU        & $90$                  & $400$               \\
Unflatten               & $400$                & $16\times5\times5$   \\
ConvTranspose2d with SiLU & $16\times5\times5$   & $16\times10\times10$ \\
ConvTranspose2d with SiLU & $16\times10\times10$ & $16\times20\times20$ \\
ConvTranspose2d with SiLU & $16\times20\times20$ & $1\;/\;3\times40\times40$ \\
\bottomrule
\end{tabular}
\end{table}


\begin{table}[ht]
\centering
\small
\caption{Eulerian/Lagrangian CAE architecture for 2D Burgers. The weight decay used in training is $10^{-8}$ and the coefficient of gradient loss $\lambda_{\mathrm{grad}}=0.05$. All convolutional and transpose-convolutional layers use a kernel size of $5$, stride $2$, and zero padding $2$.}\label{tab:2d_burgers_cae_arch}
\begin{tabular}{lcc}
\toprule
\multicolumn{3}{l}{\textbf{encoder}} \\
\midrule
layer & input shape & output shape \\
\midrule
Conv2d with SiLU & $2\;/\;4\times128\times128$   & $32\times64\times64$ \\
Conv2d with SiLU & $32\times64\times64$    & $32\times32\times32$ \\
Conv2d with SiLU & $32\times32\times32$    & $32\times16\times16$ \\
Conv2d with SiLU & $32\times16\times16$    & $32\times8\times8$   \\
Flatten          & $32\times8\times8$      & $2048$               \\
Linear with SiLU & $2048$                   & $100$                 \\
Linear with SiLU & $100$                     & $8$                  \\
\midrule
\multicolumn{3}{l}{\textbf{decoder}} \\
\midrule
layer & input shape & output shape \\
\midrule
Linear with SiLU        & $8$                   & $100$                 \\
Linear with SiLU        & $100$                  & $2048$               \\
Unflatten               & $2048$                & $32\times8\times8$   \\
ConvTranspose2d with SiLU & $32\times8\times8$   & $32\times16\times16$ \\
ConvTranspose2d with SiLU & $32\times16\times16$ & $32\times32\times32$ \\
ConvTranspose2d with SiLU & $32\times32\times32$ & $32\times64\times64$ \\
ConvTranspose2d with SiLU & $32\times64\times64$ & $2\;/\;4\times128\times128$ \\
\bottomrule
\end{tabular}
\end{table}

\end{document}